\newtheorem{theorem}{Theorem}[section]
\newtheorem{lemma}[theorem]{Lemma}
\newtheorem{prop}[theorem]{Proposition}
\theoremstyle{definition}
\theoremstyle{example}
\theoremstyle{remark}
\newtheorem{remark}[theorem]{Remark}
\numberwithin{equation}{section}
\numberwithin{theorem}{section}
\begin{document}
	
	\title[Extended inverse theorems for sumsets in integers]{Extended inverse theorems for $h$-fold sumsets in integers}
	
	
	\author[Mohan]{Mohan}
	\address{Department of Mathematics, Indian Institute of Technology Roorkee, Uttarakhand, 247667, India}
	\email{mohan98math@gmail.com}

	
	\author[R K Pandey]{Ram Krishna Pandey$^{\dagger}$}
	\address{Department of Mathematics, Indian Institute of Technology Roorkee, Uttarakhand, 247667, India}
	\email{ram.pandey@ma.iitr.ac.in}
	\thanks{$^{\dagger}$The corresponding author}

	\subjclass[2010]{11P70, 11B75, 11B13}
	
	
	
	\keywords{Sumset; $h$-fold sumset; inverse problem; extended inverse problem}

	\begin{abstract}
		Let $h \geq 2$, $k \geq 5$ be integers and $A$ be a nonempty finite set of $k$ integers. Very recently, Tang and Xing studied extended inverse theorems for  $hk-h+1 < \left|hA\right| \leq hk+2h-3$. In this paper, we extend the work of Tang and Xing and study all possible inverse theorems for  $hk-h+1<\left|hA\right| \leq hk+3h-4$. Furthermore, we give a range of $|hA|$ for which inverse problems are not possible.
	\end{abstract}
	
	\maketitle
	

	\section{Introduction}
	Let $A=\lbrace a_{0},a_{1}, \ldots, a_{k-1}\rbrace$ be a nonempty finite set of integers and $h$ be a positive integer. The \textit{$h$-fold sumset} is,  denoted by $hA$, defined  by
	\[hA:=\left\lbrace \sum_{i=0}^{k-1} \lambda_{i} a_{i}: \lambda_{i} \in \mathbb{N} \cup \left\lbrace 0\right\rbrace \ \text{for} \ i= 0, 1, \ldots, k-1 \ \text{with} \  \sum_{i=0}^{k-1} \lambda_{i}=h\right\rbrace.
	\]
	Throughout the paper, $\mathbb{N}$ denotes the set of all positive integers and  $|A|$ denotes the cardinality of the set $A$. For integers $\alpha$ and $\beta$, let
	\begin{align*}
		\alpha * A &= \lbrace \alpha a : a\in A \rbrace,\\
		A + \beta &= \lbrace a + \beta  : a \in A \rbrace.	
	\end{align*}
	For $\alpha \leq \beta$, we let $[\alpha, \beta] = \lbrace \alpha, \alpha +1, \ldots, \beta \rbrace$.
	The greatest common divisor of the integers $x_{1}, x_{2}, \ldots, x_{k}$ is denoted by $\left(x_{1},x_{2}, \ldots, x_{k}\right)$. Let $A = \{a_0, a_1, \ldots, a_{k-1}\}$ be a set of integers  with $a_0 < a_1 < \cdots < a_{k-1}$. Then, we define
	\[ d(A) := (a_{1}-a_{0}, a_{2}-a_{0}, \ldots, a_{k-1}-a_{0})\]  and  \[A^{(N)} :=  \left\lbrace a^{\prime}_{i}= \dfrac{a_{i}-a_{0}}{d}: a_{i} \in A \right\rbrace.\]
	The set $A^{(N)}$  is called the normal form of $A$. We have $d(A^{(N)})=1$. The  $h$-fold sumset $hA$ is translation and dilation invariant of set $A$, that is,
	\[h\left(\left(\alpha *A\right) + \beta \right) =(\alpha * \left(hA \right) ) + h\beta.
	\]
	
	Various types of sumsets in groups are being studied from more than two centuries. The study of  minimum cardinality of sumset is called the \textit{direct problem}  and  the characterization of the underlying set for the minimum cardinality of sumset is known as the \textit{inverse problem} in the area of additive number theory or additive combinatorics. Further, characterization of the underlying set for small deviation from the minimum size of the sumset is called the \textit{extended inverse theorem}.  The following are classical results for minimum cardinality of $h$-fold sumset $hA$ and for the associated inverse problems.
	
	\begin{theorem}\textup{\cite[Theorem 1.4, Theorem 1.6]{MBN1996}}\label{THeorem Nathanson}
		Let $h\geq 1$ and $A$ be a nonempty finite set of integers. Then
		\begin{center}
			$\left| hA\right| \geq h\left| A\right|  - h + 1$.
		\end{center}
		This lower bound is best possible. Furthermore, if  $|hA|$ attains this lower bound with $h \geq 2 $, then $A$ is an arithmetic progression.
	\end{theorem}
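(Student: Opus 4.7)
The plan splits into three parts: the direct lower bound, its sharpness, and the inverse characterization. Throughout I write $A = \{a_0 < a_1 < \cdots < a_{k-1}\}$ with $k = |A|$.

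For the lower bound, I would exhibit $hk - h + 1$ pairwise distinct elements of $hA$ by chaining two-point progressions. For each $i \in \{0, 1, \ldots, k-2\}$ and each $j \in \{1, 2, \ldots, h\}$ the element $(h - j) a_i + j a_{i+1}$ lies in $hA$; together with the extra element $h a_0$, these are strictly increasing in the lexicographic order on $(i, j)$, since $a_i < a_{i+1}$ gives monotonicity within each block and $h a_{i+1} < (h-1) a_{i+1} + a_{i+2}$ links consecutive blocks. This yields $1 + (k - 1) h = hk - h + 1$ distinct values. Sharpness follows at once: if $A = \{a + i d : 0 \leq i \leq k - 1\}$ is an arithmetic progression, then $hA = \{h a + j d : 0 \leq j \leq h(k-1)\}$ has exactly $hk - h + 1$ elements.

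For the inverse part, I would induct on $h$. For the inductive step $h \geq 3$, I would apply the elementary sumset bound $|X + Y| \geq |X| + |Y| - 1$ for finite $X, Y \subset \mathbb{Z}$ with $X = (h-1) A$ and $Y = A$; this gives
\[
|(h-1) A| \leq |hA| - |A| + 1 = (h-1) k - (h-1) + 1.
\]
Combined with the direct lower bound just proved applied to $h - 1$, equality is forced, so the inductive hypothesis says $A$ is an arithmetic progression.

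The crux is the base $h = 2$. Here the chain from the first paragraph reduces to the $2k - 1$ values
\[
2 a_0 < a_0 + a_1 < \cdots < a_0 + a_{k-1} < a_1 + a_{k-1} < \cdots < 2 a_{k-1},
\]
so under the equality hypothesis these exhaust $A + A$. I would then locate each element $a_1 + a_i$ with $2 \leq i \leq k - 1$ inside this chain: since $a_0 + a_i < a_1 + a_i \leq a_1 + a_{k-1}$, the value $a_1 + a_i$ must coincide with some $a_0 + a_j$ or some $a_j + a_{k-1}$, and a step-by-step comparison (starting at $i = 2$ and climbing) forces $a_{i+1} - a_i = a_1 - a_0$ for every $i$. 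The main obstacle I anticipate is handling the transition region around $a_0 + a_{k-1}$, where the two halves of the chain meet; a separate comparison of $a_1 + a_i$ against the upper half $\{a_j + a_{k-1}\}$ is needed there to rule out a jump in the common difference near the top, and this bookkeeping is what drives the $h = 2$ base case.
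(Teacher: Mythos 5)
This is Theorem 1.4/1.6 of Nathanson's book, quoted by the paper without proof, so there is no in-paper argument to compare against; your proposal is essentially the standard proof from \cite{MBN1996}. The lower bound via the chained two-point progressions, the sharpness example, and the reduction of $h\ge 3$ to $h=2$ through $|(h-1)A+A|\ge |(h-1)A|+|A|-1$ are all correct and complete as written. The only place you stop short is the very step you flag as the crux of the base case $h=2$, and it is worth recording that the ``transition region'' you worry about does not actually arise: for $1\le i\le k-2$ one has $a_1+a_i<a_1+a_{k-1}$, which is the \emph{smallest} element of the upper half of the chain, so $a_1+a_i$ is forced into the lower half, say $a_1+a_i=a_0+a_{j(i)}$, and since $a_1+a_i>a_0+a_i$ we get $j(i)\ge i+1$. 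The clean way to finish is a counting argument rather than a step-by-step climb: $i\mapsto j(i)$ is strictly increasing on $\{1,\dots,k-2\}$ with values in $\{2,\dots,k-1\}$, a set of the same cardinality, hence it is the unique increasing bijection $j(i)=i+1$. This yields $a_{i+1}-a_i=a_1-a_0$ for all $1\le i\le k-2$, and the case $i=k-2$ also covers the last gap, so $A$ is an arithmetic progression. With that one sentence of bookkeeping supplied, your argument is a complete and faithful rendering of Nathanson's proof.
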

	
	Freiman \cite{GAF1959, GAF1973} proved some direct and extended inverse theorem for the $2$-fold sumset $2A$ which are mentioned below in  Theorem \ref{Theorem Freiman I}  and Theorem \ref{Theorem Freiman II}, respectively.
	
	\begin{theorem}\textup{\cite[Theorem 1.10]{GAF1973}}\label{Theorem Freiman I}
		Let $k \geq 3$ and  $A=\lbrace a_{0}, a_{1}, \ldots, a_{k-1}\rbrace$ be a set of integers such that
		$0=a_{0}<a_{1}< \cdots < a_{k-1}$ with $d(A)=1$.
		\begin{itemize}
			\item[\upshape(a)] If $a_{k-1} \leq  2k-3$, then $\left|2A\right| \geq k +  a_{k-1}$.
			\item[\upshape(b)] If \  $a_{k-1} \geq  2k-2$, then $\left|2A\right| \geq 3k-3$.
		\end{itemize}
	\end{theorem}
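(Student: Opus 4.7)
The plan is to exhibit enough explicit elements of $2A$ and then split into the two stated ranges of $a_{k-1}$. A natural baseline comes from pairing every element of $A$ with one of the two extremes: the $k$ values $\{a_0 + a_i : 0 \le i \le k-1\} = \{0, a_1, \ldots, a_{k-1}\}$ lie in $[0, a_{k-1}]$, while the $k-1$ values $\{a_i + a_{k-1} : 1 \le i \le k-1\}$ lie in $(a_{k-1}, 2a_{k-1}]$, so the two blocks are disjoint and immediately give $|2A| \ge 2k-1$. It then remains to produce $a_{k-1} - k + 1$ further distinct elements of $2A$ in case (a) and $k-2$ further distinct elements in case (b).

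For part (b), with $a_{k-1} \ge 2k-2$, the normalization $d(A) = 1$ precludes $A$ from being an arithmetic progression, because an AP with common difference $1$ forces $a_{k-1} = k-1 < 2k-2$; by the uniqueness clause of Theorem \ref{THeorem Nathanson}, the inequality there is already strict and so $|2A| \ge 2k$. To push from $2k$ up to $3k-3$, I would examine the $k-2$ interior sums $a_1 + a_j$ for $2 \le j \le k-1$, which are strictly increasing in $j$ and hence mutually distinct, lying in $(a_1, a_1 + a_{k-1}]$. A case analysis using the strict ordering of $A$ and the lower bound $a_{k-1} \ge 2k-2$ should rule out collisions of the form $a_1 + a_j = a_l$ or $a_1 + a_j = a_{k-1} + a_s$, providing the required $k-2$ new elements.

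For part (a), with $a_{k-1} \le 2k-3$, the required excess $a_{k-1} - k + 1$ equals the number $|[0, a_{k-1}] \setminus A|$ of integer gaps of $A$ within its interval; the hypothesis forces $A$ to be comparatively dense there. The natural tool is the reflection $A^{\ast} := a_{k-1} - A$, a second $k$-element subset of $[0, a_{k-1}]$ satisfying $|2A^{\ast}| = |2A|$ and $2A^{\ast} = 2a_{k-1} - 2A$. Since $|A| + |A^{\ast}| = 2k > a_{k-1} + 1$, the two sets overlap substantially, and I would leverage this overlap through the involution $x \mapsto 2a_{k-1} - x$ to produce the missing sums of $2A$ near $a_{k-1}$ in exactly the prescribed quantity. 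An alternative is induction on $k$ after deleting $a_0$ or $a_{k-1}$ and renormalizing, though the possible change of $d(\cdot)$ under deletion requires care.

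The main obstacle, as is typical for Freiman-type inverse bounds, is the coincidence bookkeeping: each additional sum must be verified to be genuinely new, both with respect to the baseline and with respect to the other added sums, and this forces detailed use of the strict ordering of the $a_i$'s and the size constraint on $a_{k-1}$. A useful consistency check throughout is that the two bounds agree at the threshold $a_{k-1} = 2k-3$, each equal to $3k-3$, so the two parts splice together without a gap.
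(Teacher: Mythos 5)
The paper gives no proof of this statement---it is quoted verbatim from Freiman's monograph---so there is nothing internal to compare against; the question is whether your sketch could be completed, and for part (b) it cannot. Part (b) is the substantive half of Freiman's $3k-4$ theorem, and your plan of adjoining the $k-2$ sums $a_1+a_j$ ($2\le j\le k-1$) to the baseline $A\cup(a_{k-1}+A)$ fails outright: these sums do collide with the baseline, and no amount of case analysis on the ordering removes the collisions. Take $A=\{0,1,n-1,n\}$ with $n\ge 6$, so $k=4$, $d(A)=1$, $a_{k-1}=n\ge 2k-2$. Then $a_1+a_2=n=a_0+a_3$ and $a_1+a_3=n+1=a_1+a_{k-1}$ are both already in your baseline, which has only $2k-1=7$ elements; the bound $|2A|=9=3k-3$ is rescued only by the sums $1+1$ and $(n-1)+(n-1)$, which your recipe never considers. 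The known proofs of (b) are global: one reduces $A$ modulo $a_{k-1}$ to a set $\bar A\subseteq\mathbb{Z}/a_{k-1}\mathbb{Z}$ of size $k-1$ generating the whole group (this is where $d(A)=1$ is really used), applies Kneser's theorem to get $|2\bar A|\ge\min(a_{k-1},2k-3)=2k-3$, and then counts at least one element of $2A$ per class of $2\bar A$, a second per class of $\bar A$, and a third in the class of $0$, giving $(2k-3)+(k-1)+1=3k-3$. Your only use of $d(A)=1$ is to exclude arithmetic progressions via Theorem \ref{THeorem Nathanson}, which yields merely $|2A|\ge 2k$ and is nowhere near sufficient.

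Part (a) is closer to workable but still missing its engine. You correctly identify that the deficit $a_{k-1}-k+1$ equals the number of holes $H=[0,a_{k-1}]\setminus A$, and the pigeonhole inequality $2k>a_{k-1}+1$ is indeed the right input; what is missing is the pairing claim that makes it a proof: for each hole $j\in H$, at least one of $j$ and $j+a_{k-1}$ lies in $2A$. (If both were missing, then $A\cap[0,j]$ and $j-(A\cap[0,j])$ would be disjoint subsets of $[0,j]$, and $A\cap[j,a_{k-1}]$ and $(a_{k-1}+j)-(A\cap[j,a_{k-1}])$ would be disjoint subsets of $[j,a_{k-1}]$; adding the two inequalities and using $j\notin A$ gives $2k\le a_{k-1}+2$, contradicting $a_{k-1}\le 2k-3$.) This yields $|H|$ new elements on top of the $2k-1$ baseline, hence $|2A|\ge 2k-1+(a_{k-1}+1-k)=k+a_{k-1}$. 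Your reflection $A^{\ast}=a_{k-1}-A$ and the involution $x\mapsto 2a_{k-1}-x$ gesture in this direction but, as written, only reprove that $a_{k-1}\in 2A$; they do not produce the per-hole dichotomy. I would recommend either supplying these arguments in full or simply citing the result (as the paper does), since reproving (b) elementarily is a nontrivial undertaking (Freiman, Steinig, Lev--Smeliansky).
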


	\begin{theorem}\textup{\cite[Theorem 1.9]{GAF1973}}\label{Theorem Freiman II}
		Let $A$ be a finite set of $k \geq 3$ integers.
		If  $\left|2A\right|=2k-1+b<3k-3$, then $A$ is a subset of an arithmetic progression of length atmost  $k+b$.
	\end{theorem}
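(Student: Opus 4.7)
The plan is to deduce this inverse statement directly from the direct theorem, Theorem \ref{Theorem Freiman I}, by a short dichotomy argument. First I would reduce to the normal form. Since $2A$ has the same cardinality as $2A^{(N)}$ under the affine map $x \mapsto (x-a_0)/d(A)$, and since $A$ is contained in an arithmetic progression of length $\ell$ if and only if $A^{(N)}$ is contained in an arithmetic progression of length $\ell$ (with common difference $1$), we may freely assume $A = \{a_0, a_1, \ldots, a_{k-1}\}$ is in normal form, i.e.\ $0 = a_0 < a_1 < \cdots < a_{k-1}$ with $d(A) = 1$. Under this normalization, showing that $A$ lies in an arithmetic progression of length at most $k+b$ amounts to showing $a_{k-1} \leq k+b-1$, so that $A \subseteq [0, k+b-1]$.

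Next I would handle the two cases of Theorem \ref{Theorem Freiman I}. The hypothesis $|2A| = 2k-1+b < 3k-3$ immediately rules out case (b): if $a_{k-1} \geq 2k-2$ held, part (b) would give $|2A| \geq 3k-3$, contradicting the hypothesis. Hence $a_{k-1} \leq 2k-3$, placing us squarely in the regime of part (a). Applying part (a) then yields
\[
2k - 1 + b \;=\; |2A| \;\geq\; k + a_{k-1},
\]
so $a_{k-1} \leq k + b - 1$, which is exactly the required bound. Combined with $a_0 = 0$, this gives $A \subseteq [0, k+b-1]$, an arithmetic progression of length $k+b$, and unwinding the normalization then gives the conclusion for the original $A$ as a subset of the arithmetic progression $\{a_0 + jd(A) : 0 \leq j \leq k+b-1\}$.

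I do not anticipate a genuine obstacle here, because the substantive combinatorial content is entirely absorbed into Theorem \ref{Theorem Freiman I}; the inverse statement is then essentially a rewriting. The only thing to be careful about is making the reduction to normal form cleanly, in particular noting that subsethood in an arithmetic progression of a prescribed length is preserved under translation and dilation, so that the invariance relation $h((\alpha * A) + \beta) = (\alpha * (hA)) + h\beta$ recorded in the introduction legitimately transfers both the hypothesis on $|2A|$ and the desired conclusion between $A$ and $A^{(N)}$.
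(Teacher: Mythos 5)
Your argument is correct. Note that the paper itself offers no proof of this statement---it is quoted from Freiman's monograph as \cite[Theorem 1.9]{GAF1973}---so there is nothing internal to compare against; your deduction from Theorem \ref{Theorem Freiman I} (rule out case (b) since $|2A|<3k-3$, then read off $a_{k-1}\leq |2A|-k=k+b-1$ from case (a) after normalizing) is the standard derivation and is complete, including the routine verification that both the hypothesis on $|2A|$ and the conclusion about containment in an arithmetic progression of length $k+b$ transfer between $A$ and $A^{(N)}$.
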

	
	Lev's \cite{VFL1996} generalization of Theorem \ref{Theorem Freiman I}  for the $h$-fold sumset $hA$ is as follows.
	
	\begin{theorem}\textup{\cite[Theorem 1]{VFL1996}}\label{Theorem Lev}
		Let $h,k \geq 2$ be integers. Let $A= \lbrace a_{0}, a_{1}, \ldots, a_{k-1} \rbrace$ be a set of integers such that $0=a_{0}<a_{1}< \cdots< a_{k-1}$ and $d(A)=1$. Then  $$ \left|hA \right| \geq |(h-1)A|+ \min\lbrace a_{k-1},h(k-2)+1 \rbrace.$$
	\end{theorem}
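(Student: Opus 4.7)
The plan is to lower-bound $|hA| - |(h-1)A|$ by induction on $h$, with the base case $h = 2$ being exactly Theorem~\ref{Theorem Freiman I}. Since $0 = a_0 \in A$, we have $(h-1)A \subseteq hA$, so the task reduces to estimating $|hA \setminus (h-1)A|$ from below. Because $(h-1)A \subseteq [0,\,(h-1)a_{k-1}]$, every element of $hA$ lying in the interval $I := \bigl((h-1)a_{k-1},\,h\,a_{k-1}\bigr]$ is automatically new; since $I$ contains exactly $a_{k-1}$ integers, we immediately obtain $|hA \cap I| \leq a_{k-1}$, which matches the $a_{k-1}$ branch of the minimum in the statement.

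For the base case $h = 2$, Theorem~\ref{Theorem Freiman I} supplies the inequality directly: if $a_{k-1} \leq 2k - 3 = 2(k-2) + 1$ it yields $|2A| \geq k + a_{k-1}$, and otherwise it yields $|2A| \geq 3k - 3 = k + 2(k-2) + 1$, so $|2A| - |A| \geq \min\{a_{k-1}, 2(k-2)+1\}$ in both regimes. For the inductive step, I would exhibit the required $h(k-2) + 1$ distinct new elements of $hA \setminus (h-1)A$ by combining two sources: first, the $k-1$ immediate elements $(h-1)a_{k-1} + a_j$ for $j = 1, \ldots, k-1$, all lying in $I$; and second, candidates of the form
\[
s_{m,j} \;=\; (h-m)\,a_{k-1} + (m-1)\,a_{k-2} + a_j, \qquad 2 \leq m \leq h,\ 1 \leq j \leq k-2,
\]
each in $hA$ and satisfying $s_{m,j} - (h-1)a_{k-1} = a_j - (m-1)(a_{k-1} - a_{k-2})$, so $s_{m,j} \in I$ whenever $a_j > (m-1)(a_{k-1}-a_{k-2})$.

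The main obstacle is ensuring distinctness and validity of the $s_{m,j}$ in all regimes of the gap $\delta := a_{k-1} - a_{k-2}$. When $\delta$ is small, the family delivers many elements but one must rule out collisions $s_{m,j} = s_{m',j'}$ (which occur precisely when $a_{j'} - a_j = (m' - m)\delta$); when $\delta$ is large, the inequality $a_j > (m-1)\delta$ fails for most $(m,j)$, and one must substitute representatives built from other differences $a_{k-1} - a_i$. Both regimes should be handled uniformly by exploiting $d(A) = 1$ --- which forces some integer combination of $a_1 - a_0, \ldots, a_{k-1} - a_0$ to equal $1$ --- together with an inductive application of the result to an auxiliary subset of $A$ (for instance $A \setminus \{a_{k-1}\}$ after renormalization). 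This careful combinatorial case analysis is the heart of the proof.
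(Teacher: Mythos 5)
The paper does not prove this statement at all: it is quoted verbatim as Theorem 1 of Lev's paper \cite{VFL1996} and used as a black box, so there is no internal proof to compare yours against. Judged on its own, your base case $h=2$ is fine for $k\geq 3$ (Theorem \ref{Theorem Freiman I} gives $|2A|-|A|\geq a_{k-1}$ when $a_{k-1}\leq 2k-3=2(k-2)+1$ and $|2A|-|A|\geq 2(k-2)+1$ otherwise; the case $k=2$ forces $A=\{0,1\}$ and is trivial). The inductive step, however, is not a proof but a plan, and the specific construction you commit to provably falls short.

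Concretely, set $\delta=a_{k-1}-a_{k-2}$ and take $A=\{0,1,\dots,k-2,N\}$ with $N$ large. Then $\delta=N-(k-2)\geq a_{k-2}$, so the condition $a_j>(m-1)\delta$ fails for every $m\geq 2$ and every $j\leq k-2$, and your family $s_{m,j}$ contributes nothing beyond the $k-1$ sums $(h-1)a_{k-1}+a_j$. But the theorem demands $\min\{a_{k-1},h(k-2)+1\}=h(k-2)+1$ new elements here, and $k-1<h(k-2)+1$ for all $h\geq 2$, $k\geq 3$; in this regime the missing elements are not even of the form ``$(h-1)a_{k-1}$ plus something in $(0,a_{k-1}]$'' built from a single large summand, but rather sums such as $(h-j)a_{k-1}+(\text{elements of }j\{0,\dots,k-2\})$ for varying $j$, which live throughout $[0,ha_{k-1}]$ and whose novelty relative to $(h-1)A$ requires a separate argument. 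You acknowledge this (``one must substitute representatives built from other differences'') but supply neither the substitutes, nor the collision analysis for the small-$\delta$ regime, nor a workable formulation of the auxiliary induction: passing to $A\setminus\{a_{k-1}\}$ changes $k$, changes the top element, and can destroy $d(A)=1$, so it is not clear the induction closes. Since you yourself label the missing part ``the heart of the proof,'' the proposal as it stands does not establish the theorem; it would need to be replaced by (or completed into) an argument along the lines of Lev's original one.
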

	
	Recently, Tang and  Xing \cite{TANG2021} proved some extended inverse theorems for $hA$, where $h \geq 2$. Some other generalizations of Theorem \ref{Theorem Freiman I}, can also be seen in \cite{TANG2021}. The two main extended inverse theorems of Tang and  Xing \cite{TANG2021} are mentioned below.
	
	\begin{theorem}\textup{\cite[Theorem 1.1]{TANG2021}} \label{Theorem Tang I}
		Let $h \geq 2$ and $k \geq 5$ be integers. Let $A$ be a set of integers with $\left|A\right|=k$. If $hk-h+1< \left|hA\right| \leq hk+h-2$, then $$ A^{(N)}=[0,k] \setminus \{x\} \text{ for } 1\leq x \leq k-1.$$
		Moreover, $\left|hA\right|=hk$ for $x=1$ or $k-1$, and $\left|hA\right|=hk+1$ for $2\leq x \leq k-2$.
	\end{theorem}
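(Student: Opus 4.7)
The plan is to first reduce to normal form, then determine the structure of $A$ via an iterated application of Theorem \ref{Theorem Lev}, and finally compute $|hA|$ in each resulting case.

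Since $|hA|$ is invariant under translation and nonzero dilation, I replace $A$ by its normal form and assume $0 = a_0 < a_1 < \cdots < a_{k-1}$ with $d(A) = 1$. The first goal is to show $a_{k-1} = k$. Iterating Theorem \ref{Theorem Lev} from the trivial base $|A| = k$ yields
\[
|hA| \geq k + \sum_{j=2}^{h} \min\{a_{k-1},\, j(k-2)+1\}.
\]
If $a_{k-1} \geq k+1$, then since $k \geq 5$ implies $j(k-2)+1 \geq 2k-3 \geq k+1$ for every $j \geq 2$, each summand is at least $k+1$, so $|hA| \geq k + (h-1)(k+1) = hk + h - 1$, contradicting $|hA| \leq hk+h-2$. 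Hence $a_{k-1} \leq k$. The possibility $a_{k-1} = k-1$ would force $A = [0, k-1]$, giving $|hA| = hk - h + 1$ by Theorem \ref{THeorem Nathanson}, which contradicts $|hA| > hk-h+1$. Therefore $a_{k-1} = k$ and $A^{(N)} = [0, k] \setminus \{x\}$ for some $1 \leq x \leq k-1$.

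Next I compute $|hA|$ for each $x$. When $x = 1$, $A = \{0\} \cup [2, k]$, and grouping summands by the number $m$ of terms drawn from $[2, k]$ gives
\[
hA = \{0\} \cup \bigcup_{m=1}^{h} [2m,\, mk],
\]
whose consecutive intervals overlap since $m(k-2) \geq 1$ for $m \geq 1$ and $k \geq 3$; hence $hA = [0, hk] \setminus \{1\}$ and $|hA| = hk$. The case $x = k-1$ follows by the reflection $a \mapsto k-a$, which preserves $|hA|$. When $2 \leq x \leq k-2$, writing $A = [0, x-1] \cup [x+1, k]$ and splitting $hA$ by the number $m$ of summands drawn from $[x+1, k]$ gives
\[
hA = \bigcup_{m=0}^{h} [m(x+1),\, mk + (h-m)(x-1)].
\]
The overlap quantity between consecutive intervals, $R_m - L_{m+1} + 1$, simplifies to $(x-1)(h-1) - 1 + m(k - 2x)$, a linear function of $m$; checking its two endpoints on $[0, h-1]$ against $x \geq 2$ and $x \leq k-2$ shows it is always nonnegative, so the union fills $[0, hk]$ and $|hA| = hk + 1$.

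The principal obstacle lies in the final overlap inequality. Because the slope $k - 2x$ may be positive or negative, the minimum of $R_m - L_{m+1} + 1$ over $0 \leq m \leq h-1$ can occur at either endpoint, splitting the analysis into $k \geq 2x$ (minimum at $m = 0$, reducing to $(x-1)(h-1) \geq 1$) and $k < 2x$ (minimum at $m = h-1$, reducing to $(h-1)(k-x-1) \geq 1$). Both inequalities follow at once from the hypotheses, but the case split must be executed carefully. The structural argument via iterated Lev is conceptually the crux but technically short, whereas the interval-overlap bookkeeping in the last case is where the computation needs attention.
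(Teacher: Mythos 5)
Your proof is correct: the iterated application of Theorem \ref{Theorem Lev} to force $a_{k-1}=k$ is exactly the technique this paper itself uses in Lemma \ref{Lemma I}, and your interval-overlap computations for each $x$ check out, including the endpoint case split on the sign of $k-2x$ in the range $2\leq x\leq k-2$. Note that the paper imports this theorem from \cite{TANG2021} without reproducing a proof, so the only comparison available is methodological, and on that score your argument matches the approach used throughout.
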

	
	\begin{theorem}\textup{\cite[Theorem 1.2]{TANG2021}} \label{Theorem Tang II}
		Let $h \geq 2$ and $k \geq 5$ be integers. Let A be a set of integers  with $\left|A\right|=k$. If $hk+h-2<\left|hA\right| \leq hk+2h-3$, then \[A^{(N)}=[0,k+1] \setminus \{x,y\}, 1\leq x < y \leq k+1.\]
		
		Moreover, we have
		\begin{itemize}
			\item[\upshape(a)] $\left|hA\right|= hk+h-1$ for $\lbrace x,y \rbrace=\lbrace 1,2 \rbrace, \lbrace k-1,k \rbrace, \lbrace 1,k \rbrace, \lbrace 1,3  \rbrace, \lbrace k-2,k \rbrace;$
			
			\item[\upshape(b)] $\left|hA\right|=hk+h$ for $x=1$ and $4\leq y \leq k-1$ when $h\geq 2$; or $2\leq x \leq k-3$ and $y=k$ when $h\geq 2$, or $\{x,y\} = \{2,3\}, \{ k-2,k-1 \}$ when $h=2$;
			
			\item[\upshape(c)] $\left|hA\right|=hk+h+1$ for $2\leq x < y \leq k-1$, except for $\{x,y\} = \{2,3\}, \{ k-2,k-1 \}$ when $h=2$.
		\end{itemize}
	\end{theorem}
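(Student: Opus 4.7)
The plan is to combine Lev's theorem with a diameter bound to reduce $A^{(N)}$ to a short list of candidate sets, and then evaluate $|hA|$ explicitly on each.

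\textbf{Step 1: bounding the diameter.} By translation--dilation invariance of $hA$, I assume $A = A^{(N)}$, so that $0 = a_0 < a_1 < \cdots < a_{k-1}$ and $d(A) = 1$. Iterating Theorem \ref{Theorem Lev} yields
\[
|hA| \;\geq\; k + \sum_{j=2}^{h} \min\bigl\{a_{k-1},\, j(k-2)+1\bigr\}.
\]
If $a_{k-1} \geq 2k-2$, the $j = 2$ summand contributes $2k - 3$ and every later summand at least $2k - 2$, so the right-hand side is at least $k + (2k-3) + (h-2)(2k-2)$, which for $k \geq 5$ and $h \geq 2$ strictly exceeds $hk + 2h - 3$. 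Hence $a_{k-1} \leq 2k-3$, in which case each minimum simplifies to $a_{k-1}$, and the inequality $k + (h-1)a_{k-1} \leq hk + 2h - 3$ forces $a_{k-1} \leq k + (2h-3)/(h-1) < k + 2$, i.e.\ $a_{k-1} \leq k + 1$. Since $|A| = k$, $0 \in A$, and $A \subseteq [0, k+1]$, the complement contains exactly two integers $1 \leq x < y \leq k + 1$, so $A^{(N)} = [0, k+1] \setminus \{x, y\}$.

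\textbf{Step 2: evaluating $|hA|$ on each candidate.} Since $h[0, k+1] = [0, h(k+1)]$, one may write $|hA| = hk + h + 1 - N(x, y)$, where $N(x, y)$ counts integers $s \in [0, h(k+1)]$ such that every $h$-part partition of $s$ into elements of $[0, k+1]$ uses $x$ or $y$. I would parametrize the enumeration by the gap-triple $(x,\, y - x,\, k + 1 - y)$ and exploit the reflection $a \mapsto (k+1) - a$, which swaps $\{x, y\}$ with $\{k+1-y,\, k+1-x\}$, to halve the case-work. A direct inspection then shows $N(x,y) = 2$ for the five boundary-hugging pairs in (a) (giving $|hA| = hk + h - 1$), $N(x,y) = 1$ for the one-boundary pairs in (b) (giving $|hA| = hk + h$), and $N(x,y) = 0$ for the fully interior pairs in (c) (giving $|hA| = hk + h + 1$). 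Restricting to those pairs whose $|hA|$ lies in $(hk + h - 2,\, hk + 2h - 3]$ then recovers exactly the decomposition in (a)--(c).

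\textbf{Main obstacle.} The principal combinatorial difficulty lies in Step 2, specifically the $h = 2$ anomaly. For $h \geq 3$ one can ``route around'' a forbidden coordinate by padding with extra copies of $0$ or $k + 1$; at $h = 2$ this freedom disappears. With $\{x, y\} = \{2, 3\}$, for instance, the only $2$-part partitions of $3$ inside $[0, k+1]$ are $0 + 3$ and $1 + 2$, both of which touch $\{2, 3\}$, so $3 \notin 2A$; whereas for every $h \geq 3$ the partition $1 + 1 + \cdots + 1$ shows $3 \in hA$. This single extra unreachable sum shifts $\{2, 3\}$ and (by reflection) $\{k-2, k-1\}$ from class (c) into class (b) when $h = 2$, exactly as stated. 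Accounting for this $h = 2$ anomaly uniformly, and verifying $N(x, y) \in \{0, 1, 2\}$ cleanly across each symmetry-reduced gap-triple, is the substantive part of the argument.
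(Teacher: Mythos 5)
The paper does not actually prove this statement; it is imported verbatim from Tang--Xing \cite{TANG2021}, so there is no in-paper proof to compare against. Judged on its own, your two-step plan is sound and is precisely the strategy this paper deploys for its own three-elements-removed analogue: your Step 1 is the same iterated application of Theorem \ref{Theorem Lev} that appears as Lemma \ref{Lemma I} (with $3h-4$ replaced by $2h-3$), and the arithmetic checks out --- the surplus is $(h-1)(k-4)>0$, which rules out $a_{k-1}\ge 2k-2$, and then $k+(h-1)a_{k-1}\le hk+2h-3$ forces $a_{k-1}\le k+1$; your Step 2 is the analogue of Propositions \ref{Proposition II}--\ref{Proposition IV}, and the representative computations you exhibit (including the $h=2$ anomaly at $\{2,3\}$, $\{k-2,k-1\}$, where $3\notin 2A$ but $3=1+1+1\in hA$ for $h\ge 3$) are correct. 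Two points to tighten. First, the closing claim that $N(x,y)\in\{0,1,2\}$ across all gap-triples is false for the pairs with $y=k+1$: there $A=[0,k]\setminus\{x\}$, $N(x,y)\in\{h,h+1\}$, and these sets are governed by Theorem \ref{Theorem Tang I}; for $h\ge3$ they fall outside the stated range, but for $h=2$ the pairs $\{x,k+1\}$ with $2\le x\le k-2$ give $|2A|=2k+1=hk+h-1$ and so do lie in the range (the quoted statement omits them from (a), though the paper's subsequent list of conclusions reinstates them) --- your argument must treat these explicitly rather than assume they disappear. Second, the phrase ``a direct inspection then shows'' conceals essentially all of the remaining work: each class of pairs needs a covering argument of the kind given by Lemma \ref{Lemma IV} and Proposition \ref{Proposition I} (writing every $m$ in $[a_t,ha_t-1]$ as $\lfloor m/a_t\rfloor-1$ copies of $a_t$ plus an element of $2A$), and until that case analysis is written out the proof is a correct plan rather than a proof.
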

	
	Conclusions from Theorem \ref{Theorem Tang I} and Theorem \ref{Theorem Tang II} are the following:
	\begin{enumerate}
	\item $\left|hA\right|=hk$ if and only if $A^{(N)}= [0,k] \setminus \{x\}$, where $x\in \{1,k-1\}$,
	
	\item for $h\geq 3$,  $\left|hA\right|=hk+1$ if and only if  $A^{(N)}=[0,k] \setminus \lbrace x \rbrace$, where $2\leq x \leq k-2$,
	
	\item $\left|2A\right|=2k+1$ if and only if $A^{(N)}=[0,k+1] \setminus \lbrace x,y \rbrace$, where $\{x,y\}$ is one of the sets $\{1,2\}$, $\{k-1,k\}$, $\{1,k\}$, $\{1,3\}$, $\{k-2,k\}$, and $\{i,k+1\}$ where $2\leq i \leq k-2$,
	
	\item for $h\geq 3$, $\left|hA\right|=hk+h-1$ if and only if $A^{(N)}=[0,k+1] \setminus \{x,y\}  $, where  $ \{x,y\}$ is one of the sets  $\{1,2\}, \{k-1,k\}, \{1,k\}, \{1,3\}$, and  $\{k-2,k\}$,
	
	\item for $h\geq 3$, $\left|hA\right|=hk+h$ if and only if $A^{(N)}=[0,k+1] \setminus \{ x,y \}$, where $\{x,y\}$ is one of the sets $\{i,k\}$ where $2 \leq i \leq k-3$, and  $\{1,j\}$ where $4 \leq j \leq k-1$,
	
	\item for $h\geq 4$, $\left|hA\right|=hk+h+1$ if and only if $A^{(N)}=[0,k+1] \setminus \{ x, y \}$, where $2\leq x < y \leq k-1$.
\end{enumerate}

	In this paper, we study possible inverse problems, for $hk+h \leq \left|hA\right| \leq hk+2h+1$. Our main result, Theorem \ref{Main Theorem},  extends the work of Tang and Xing \cite{TANG2021}.

	\begin{theorem}\label{Main Theorem}
			Let $h \geq 2$ and $k \geq 5$ be positive integers. Let $A=$ be a set of $k$ integers with $\min(A) = 0$ and  $d(A)=1.$ Then
		\begin{enumerate}
			\item[\upshape(1)] for $k\geq 6$, $\left|2A\right|= 2k+2$ if and only if $A=[0,k+2]\setminus\{x,y,z\}$, where  $\{x,y,z\}$ is one of the sets $\{2,3,k+2\}$, $\{k-2,k-1,k+2\}$, $\{1,i,k+2\}$ where $4\leq i \leq k-1$, $\{i,k,k+2\}$ where $2\leq i \leq k-3$, $\{1,2,3\}$, $\{k-1,k,k+1\}$, $\{1,2,4\}$, $\{k-2,k,k+1\}$, $\{1,2,k+1\}$, $\{1,k,k+1\}$,  $\{1,2,5\}$, $\{k-3,k,k+1\}$, $\{1,3,5\}$, $\{k-3,k-1,k+1\}$, $\{1,3,k+1\}$, $\{1,k-1,k+1\}$, and $\{2,3,k+2\}$.
			
			\item[\upshape(2)] $\left|3A\right|=3k+4$ if and only if  $A=[0,k+2] \setminus\{x,y,z\}$, where  $\{x,y,z\}$ is one of the sets $\{1,2,3\}$, $\{k-1,k,k+1\}$,   $\{1,2,4\}$, $\{k-2,k,k+1\}$, $\{1,2,k+1\}$, $\{1,k,k+1\}$,  $\{1,2,5\}$, $\{k-3,k,k+1\}$, $\{1,3,5\}$, $\{k-3,k-1,k+1\}$, $\{1,3,k+1\}$, $\{1,k-1,k+1\}$, and 
			$\{i,j,k+2\}$ where $2 \leq i < j \leq k-1$.
			
			\item[\upshape(3)] for $h \geq 4$, $\left|hA\right| = hk+2h-2$ if and only if $A=[0,k+2] \setminus \{x,y,z\}$, where  $\{x,y,z\}$ is one of the sets $\{1,2,3\}$, $\{k-1,k,k+1\}$,  $\{1,2,4\}$, $\{k-2,k,k+1\}$, $\{1,2,k+1\}$, $\{1,k,k+1\}$, $\{1,2,5\}$, $\{k-3,k,k+1\}$,  $\{1,3,5\}$, $\{k-3,k-1,k+1\}$, $\{ 1,3,k+1 \}$, and $\{1,k-1,k+1\}$.
			
			\item[\upshape(4)] for  $h \geq 4$ and $k \geq 5$ or $h \geq 3 $ and $k \geq 6$ , $\left|hA\right|= hk+2h-1$ if and only if $A=[0,k+2] \setminus \{x,y,z\}$, where  $\{x,y,z\}$ is one of the sets $\{1,3,4\}$, $\{k-2,k-1,k+1\}$,  $\{1,2,i\}$ where $6\leq i \leq k$, $\{i,k,k+1\}$ where $2 \leq i \leq k-4$,  $\{1,3,i\}$ where $6\leq i \leq k$, $\{i,k-1,k+1\}$ where $2 \leq i \leq k-4$, and  $\{1,i,k+1\}$ where $4\leq i \leq k-2$.
			
			\item[\upshape(5)] for $h \geq 4$ and $k \geq 6$ or $h \geq 5$ and $k \geq 5$,  $\left|hA\right|= hk+2h$ if and only if $A=[0,k+2] \setminus \{x,y,z\}$, where  $\{x,y,z\}$ is one of the sets $\{2,3,k+1\}$, $\{1,k-1,k\}$, $\{i,i+1,k+1\}$ where $ 3 \leq i \leq k-4$, $\{1,i,i+1\}$ where $5 \leq i \leq k-2$,   $\{i,i+2,k+1\}$ where $2 \leq i \leq k-4$, $\{1,i,i+2\}$ where $4 \leq i \leq k-2$,  $\{1,i,j\}$ where $4\leq i \leq j-3 \leq k-3$, and $\{i,j,k+1\}$ where $2 \leq i \leq j-3 \leq k-5$.
			
			\item[\upshape(6)] for $h \geq 5$ and $ k \geq 6$ or $h \geq 6$ and $k \geq  5$, $ \left|hA\right| = hk+2h+1$ if and only if $A=[0,k+2] \setminus \{x,y,z\}$, where $2\leq x < y < z \leq k$
			
		\end{enumerate}
	\end{theorem}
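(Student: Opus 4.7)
The argument splits into sufficiency (each listed $A$ achieves the claimed $\abs{hA}$) and necessity (every $A$ achieving a target cardinality is one of the listed forms).

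For sufficiency I would verify each family by direct computation organised via the recursion $hA=(h-1)A+A$. For $A^{(N)}=[0,k+2]\setminus\{x,y,z\}$, the doubled sumset has the shape $2A=[0,2(k+2)]\setminus T_{2}$ for a small exceptional set $T_{2}$ concentrated near $0$ and $2(k+2)$, determined by how $\{x,y,z\}$ meets the boundary windows $\{1,2\}$ and $\{k,k+1\}$. An easy induction then shows $hA=[0,h(k+2)]\setminus T_{h}$ with $\abs{T_{h}}$ stabilising, so $\abs{hA}$ is a linear function of $h$. For instance, one verifies $hA=\{0\}\cup[4,h(k+2)]$ when $\{x,y,z\}=\{1,2,3\}$, giving $\abs{hA}=hk+2h-2$ and contributing to parts (1)--(3) according to $h$. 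The reflection $A\mapsto(k+2)-A$ pairs each near-$0$ configuration with its near-$(k+2)$ mirror and halves the enumeration.

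For necessity, the first step is to bound $a_{k-1}$ by iterated use of Lev. Theorem~\ref{Theorem Lev} telescopes to
\[
\abs{hA}\;\geq\; k+\sum_{j=2}^{h}\min\bigl\{a_{k-1},\,j(k-2)+1\bigr\},
\]
and for $k\geq 5$ together with $a_{k-1}\geq k+3$ this yields $\abs{hA}\geq hk+3h-3$, contradicting the upper bounds in all six parts (with a slightly sharper observation needed only in the borderline $h=4$, $k=5$ corner). Thus $a_{k-1}\in\{k,k+1,k+2\}$. In the cases $a_{k-1}=k$ and $a_{k-1}=k+1$, Theorems~\ref{Theorem Tang I} and \ref{Theorem Tang II} already classify $A^{(N)}$ as $[0,k]\setminus\{u\}$ or $[0,k+1]\setminus\{u,v\}$ with the corresponding values of $\abs{hA}$; reading off those whose cardinality meets the target and re-writing them as $[0,k+2]\setminus\{u,v,k+2\}$ or $[0,k+2]\setminus\{u,k+1,k+2\}$ recovers exactly the triples in parts (1) and (2) that contain $k+2$.

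The substantive case is $a_{k-1}=k+2$, where $\{x,y,z\}\subseteq[1,k+1]$. Writing $B=\{x,y,z\}=[0,k+2]\setminus A^{(N)}$, a value $s\in[0,h(k+2)]$ belongs to $hA$ whenever some representation $s=b_{1}+\cdots+b_{h}$ with $b_{i}\in[0,k+2]$ avoids $B$; since $0,k+2\in A^{(N)}$ one can freely pad such a representation with $0$'s and $(k+2)$'s, so obstructions can occur only for $s$ within distance $O(k)$ of $0$ or of $h(k+2)$. I would therefore enumerate by the pair $(B\cap[1,3],\,B\cap[k-1,k+1])$, which, up to reflection, yields only a short list of patterns; for each pattern I would compute $\abs{hA}$ explicitly and match the output against the families in parts (1)--(6), with the ``generic'' interior triples $2\leq x<y<z\leq k$ covered by a direct proof that $hA=[0,h(k+2)]$. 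The main obstacle is the combinatorial bookkeeping in this final enumeration: each boundary pattern of $B$ produces a particular gap set in $2A$, and one must verify that each gap does or does not get filled when passing to $hA$. The behaviour is genuinely $h$-sensitive---a single large element of $A$ can, for small $h$, fill in what survives as a gap for large $h$, while some gaps close up only after several iterations---which explains the different thresholds on $h$ (and the excluded borderline $k$) across the six parts. Careful use of the reflection symmetry, of the splitting $hA=2A+(h-2)A$ once $2A$ is known, and of Lev's inequality as a short-circuit whenever the partial lower bound already exceeds the target cardinality, should make the case analysis tractable.
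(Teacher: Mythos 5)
Your skeleton coincides with the paper's: telescope Theorem \ref{Theorem Lev} to force $a_{k-1}\leq k+2$ (this is exactly Lemma \ref{Lemma I}, plus Theorem \ref{Theorem Freiman II} for the $h=2$ case), dispose of $a_{k-1}\leq k+1$ by quoting Theorems \ref{Theorem Tang I} and \ref{Theorem Tang II}, and then compute $\abs{hA}$ for every triple $\{x,y,z\}\subseteq[1,k+1]$ with $a_{k-1}=k+2$, using reflection $A\mapsto (k+2)-A$ to halve the work. However, the organizing principle you propose for that final exhaustive step does not work as stated, and this is where the real content of the proof lives. You claim that, up to reflection, the pair $\bigl(B\cap[1,3],\,B\cap[k-1,k+1]\bigr)$ with $B=\{x,y,z\}$ yields a short list of patterns each of which determines $\abs{hA}$. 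It does not: $B=\{1,2,5\}$ and $B=\{1,2,6\}$ have identical window data (for $k$ large), yet $\abs{hA}=h(k+2)-2$ in the first case and $h(k+2)-1$ in the second; likewise $\{1,3,5\}$ versus $\{1,3,6\}$ give $h(k+2)-2$ versus $h(k+2)-1$, and $\{1,2,4\}$ versus $\{1,2,6\}$ differ in the same way. The quantity $\abs{hA}$ is governed by the full adjacency structure of the three removed elements (whether they are mutually consecutive, at distance $2$, etc.) in combination with their proximity to $0$ and $k+2$, not by fixed-radius boundary windows. The paper's Propositions \ref{Proposition II}--\ref{Proposition IV} are stratified precisely along these lines (three consecutive; exactly two consecutive; pairwise non-consecutive, with sub-cases by the gaps), and without an equally fine stratification your enumeration would conflate triples with different sumset sizes and the necessity direction would be wrong.

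Two smaller inaccuracies. First, your derivation ``$a_{k-1}\geq k+3$ implies $\abs{hA}\geq hk+3h-3$'' tacitly assumes the minimum in the telescoped bound is attained at $a_{k-1}$, i.e.\ $a_{k-1}\leq 2(k-2)+1$; when $a_{k-1}\geq 2k-2$ and $k=5$ the bound degrades to $hk+3h-4$, which is exactly why parts (5) and (6) carry the extra side conditions on $k$ — the obstruction is the branch of the $\min$, not a special value of $h$ as your parenthetical suggests. Second, for the sufficiency direction, asserting that $T_h$ ``stabilises'' under the recursion $hA=(h-1)A+A$ is itself the delicate point: several of the listed families have $\abs{hA}$ strictly increasing in $h$ through small $h$ (compare parts (2)--(6), where the same triple can satisfy different parts for different $h$), so each family still requires the explicit element-chasing of Lemma \ref{Lemma IV} and Propositions \ref{Proposition I}--\ref{Proposition IV}; the induction is not ``easy'' in a way that avoids that work.
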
	
	
	To prove Theorem \ref{Main Theorem}, we prove first some lemmas (Section \ref{lemma}) and Propositions (Section \ref{prop}). In Section \ref{Mainsection} , we prove Theorem \ref{Main Theorem} and give some concluding remarks about the cases where the extended inverse theorems are not possible in Section \ref{conclusion}.

	\section{Lemmas}\label{lemma}

	\begin{lemma}\label{Lemma I}
			Let $h\geq 2$ and $k\geq 5$ be positive integers. Let $A$ be a set in normal form having $k$ integers.
		\begin{enumerate}
			\item[\upshape(a)] If  $k \geq 6$   and $\left|hA\right|=hk+3h-4$, then $A \subseteq [0,k+2]$.
			
			\item[\upshape(b)] If   $\left|hA\right|< hk+3h-4$,  then $A \subseteq [0,k+2]$.
		\end{enumerate}
	\end{lemma}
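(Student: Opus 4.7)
The plan is to iterate Lev's bound (Theorem~\ref{Theorem Lev}) to read off an upper bound on $a_{k-1} := \max A$. Writing $A = \{0 = a_0 < a_1 < \cdots < a_{k-1}\}$ in normal form, a telescoping application of Theorem~\ref{Theorem Lev} across $j = 2, 3, \ldots, h$, combined with the trivial bound $|A| = k$, gives
\[
|hA| \geq k + \sum_{j=2}^{h} \min\{a_{k-1},\, j(k-2)+1\}.
\]
My strategy is to assume for contradiction that $a_{k-1} \geq k+3$ and then show that the right-hand side is at least $hk + 3h - 3$ in the setting of (a) and at least $hk + 3h - 4$ in the setting of (b). Either estimate contradicts the hypothesis on $|hA|$ and forces $a_{k-1} \leq k+2$, i.e.\ $A \subseteq [0, k+2]$.

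To bound the sum I would split into two cases on the size of $a_{k-1}$. In Case~1, where $k+3 \leq a_{k-1} \leq 2k-3$ (a range that is nonempty only when $k \geq 6$), every $j(k-2)+1$ with $j \geq 2$ satisfies $j(k-2)+1 \geq 2k-3 \geq a_{k-1}$, so each summand in the sum equals $a_{k-1}$ and
\[
|hA| \geq k + (h-1)a_{k-1} \geq k + (h-1)(k+3) = hk + 3h - 3.
\]
In Case~2, where $a_{k-1} \geq 2k-2$, the $j = 2$ summand contributes $2k - 3$, while for $j \geq 3$ both $a_{k-1} \geq 2k-2$ and $j(k-2)+1 \geq 3k-5 \geq 2k-2$ (using $k \geq 5$), so every remaining summand is at least $2k-2$; summing yields
\[
|hA| \geq k + (2k-3) + (h-2)(2k-2) = (2h-1)(k-1).
\]

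A one-line rewrite gives $(2h-1)(k-1) = hk + 3h - 4 + (h-1)(k-5)$. Hence the Case~2 bound is always $\geq hk + 3h - 4$ (for $h \geq 2$, $k \geq 5$), and strictly $\geq hk + 3h - 3$ precisely when $(h-1)(k-5) \geq 1$, i.e.\ when $k \geq 6$. Combined with the Case~1 estimate, this produces $|hA| \geq hk + 3h - 3 > hk + 3h - 4$ for all $k \geq 6$, which handles~(a), and $|hA| \geq hk + 3h - 4$ for all $k \geq 5$, which handles~(b).

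I do not expect any serious obstacle: the proof reduces to an arithmetic consequence of Lev's inequality after splitting on the size of $a_{k-1}$. The only delicate point, and the reason for the asymmetric hypotheses in (a) and (b), is the equality $(2h-1)(k-1) = hk + 3h - 4$ at $k = 5$: the Case~2 bound is exactly tight there, so only the strict inequality $|hA| < hk + 3h - 4$ in (b) is strong enough to rule that case out, which is why part~(a) must assume $k \geq 6$.
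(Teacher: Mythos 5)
Your proposal is correct and follows essentially the same route as the paper: both iterate Lev's inequality to get $|hA|\geq k+\sum_{j=2}^{h}\min\{a_{k-1},j(k-2)+1\}$, split on whether $a_{k-1}\leq 2k-3$ or $a_{k-1}\geq 2k-2$, and extract $a_{k-1}\leq k+2$ from the resulting arithmetic. Your closing remark correctly identifies the tightness of $(2h-1)(k-1)=hk+3h-4$ at $k=5$ as the reason part (a) needs $k\geq 6$, which is exactly the distinction the paper's proof is built around.
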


\begin{proof}
By Theorem \ref{Theorem Lev}, we have
\begin{align}\label{Lev-eq-1}
	|hA|& \geq |(h-1)A|+ \min\{a_{k-1},h(k-2)+1\} \nonumber\\
	& \geq  |(h-2)A|+ \min\{a_{k-1},h(k-2)+1 \} +\min \{a_{k-1},(h-1)(k-2)+1\} \nonumber\\
	& \vdots \nonumber \\
	& \geq |A|+ \min\{a_{k-1}, h(k-2)+1\}+ \cdots+ \min \{a_{k-1},2(k-2)+1\}.
\end{align}
Assume $k \geq 6$ and $\left|hA\right| = hk+3h-4$. If $a_{k-1} \geq 2k-2$,  then  from \ref{Lev-eq-1}, we have $$\left|hA\right| \geq k+(h-2)(2k-2)+2k-3>hk+3h-4,$$
which is not possible. Therefore $a_{k-1} \leq 2k-3$.  Again  using \ref{Lev-eq-1} and $a_{k-1} \leq 2k-3$, we obtain $$hk+3h-4 = \left|hA\right| \geq k+(h-1)a_{k-1},$$
which gives $a_{k-1}\leq k+2$. This proves $(a)$. 

Assume  $\left|hA\right|<hk+3h-4$.  If $a_{k-1} \geq 2k-2$,  then  from \ref{Lev-eq-1}, we have $$\left|hA\right| \geq k+(h-2)(2k-2)+2k-3\geq hk+3h-4,$$ which is not possible. Therefore $a_{k-1} \leq 2k-3$. Again using  \ref{Lev-eq-1} and $a_{k-1} \leq 2k-3$, we obtain  $$hk+3h-4 > \left|hA\right| \geq k+(h-1)a_{k-1},$$
which gives $a_{k-1}\leq k+2$. This proves $(b)$.
	\end{proof}
	
	\begin{lemma}\label{Lemma II}
		Let $A=[0,x-1] \cup [x+r,y]$, where $x \geq 2$, $r\geq 0$, $x+r \leq y-1$, and $\left|A\right| \geq 4$. If $h\geq r+1$, then $hA=[0,hy]$.
	\end{lemma}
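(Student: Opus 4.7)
The plan is to build every integer in $[0,hy]$ as an $h$-fold sum of elements of $A$ by splitting the $h$ summands between the two blocks of consecutive integers that make up $A$. Since $\min A = 0$ and $\max A = y$, the inclusion $hA \subseteq [0,hy]$ is automatic, so only the reverse direction requires work.

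For each $j \in \{0,1,\ldots,h\}$, I would take $j$ summands from $[0,x-1] \subseteq A$ and $h-j$ summands from $[x+r,y] \subseteq A$. Each block is a run of consecutive integers, so a direct computation of sumsets of arithmetic progressions of common difference $1$ gives $j\,[0,x-1] = [0, j(x-1)]$ and $(h-j)\,[x+r,y] = [(h-j)(x+r),\,(h-j)y]$; their Minkowski sum is the interval
\[
I_j := \bigl[(h-j)(x+r),\; j(x-1)+(h-j)y\bigr] \subseteq hA.
\]

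The key step is to show the $h+1$ intervals $I_0, I_1, \ldots, I_h$ cover $[0, hy]$. The endpoints are handled by $I_h = [0, h(x-1)]$ and $I_0 = [h(x+r), hy]$; for the adjacency of consecutive intervals one needs $\max I_j + 1 \geq \min I_{j-1}$ for each $1 \leq j \leq h$, which a short manipulation reduces to the clean inequality
\[
(j-1)(x-1) + (h-j)(y-x-r) \geq r.
\]
The hypotheses $x \geq 2$ and $x+r \leq y-1$ give $x-1 \geq 1$ and $y-x-r \geq 1$, so the left side is bounded below by $(j-1) + (h-j) = h-1$, and this is $\geq r$ by the hypothesis $h \geq r+1$.

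The only mildly delicate point is the boundary indices $j=1$ and $j=h$, where one of the two nonnegative summands degenerates to $0$; but in each boundary case the surviving summand alone is $\geq h-1 \geq r$, so the inequality still holds. I do not foresee any further obstacle: the whole lemma reduces cleanly to this interval-covering calculation.
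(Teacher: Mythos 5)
Your proof is correct and follows essentially the same route as the paper's: both decompose $hA$ as a union over $j$ of sums with $j$ summands from one block and $h-j$ from the other, and use $h\geq r+1$ to show consecutive intervals in the chain are adjacent. The only cosmetic difference is that the paper bridges the gap using just the two-element sub-blocks $\{x-2,x-1\}$ and $\{x+r,x+r+1\}$ together with the trivial inclusion $[0,h(x-1)]\cup[h(x+r),hy]\subseteq hA$, whereas you use the full blocks and cover $[0,hy]$ in one pass.
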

	
	\begin{proof}
		Clearly, $hA \supseteq [0, h(x-1) ] \cup [h(x+r), hy]$. Let $A_{1}=\lbrace x-2, x-1 \rbrace$ and $A_{2}=\lbrace x+r,x+r+1 \rbrace$ be subsets of $A$. Then $h(A_{1} \cup A_{2}) \subseteq hA$.	We have
		\begin{align*}
			h(A_{1} \cup A_{2})&= \bigcup_{l=0}^{h} \left( \left( h-l\right) A_{1} + lA_{2}\right) \\
			&= \bigcup_{l=0}^{h} \left( [(h-l)(x-2),(h-l)(x-1)]+[l(x+r),l(x+r+1)] \right)\\
			& = \bigcup_{l=0}^{h} [hx-2h+l(r+2), hx-h+l(r+2)].
		\end{align*}
		If $h \geq r+1$, then $hx-2h+(l+1)(r+2) \leq hx-h+l(r+2)+1$. So
		$$h(A_{1} \cup A_{2})=\bigcup_{l=0}^{h} [hx-2h+l(r+2), hx-h+l(r+2)]=[hx-2h,hx+h(r+1)].$$
		Hence, $hA=[0,hy]$.
	\end{proof}

	Now we generalize Lemma \ref{Lemma II} in Lemma \ref{Lemma III}.
	
	\begin{lemma}\label{Lemma III}
		Let $x$, $t$, $r$, and $y$ be integers such that $2 \leq t \leq x$, $r \geq 0$, and $x+r \leq y-t+1$.	If $A=[0,x-1] \cup [x+r,y]$, and $h \geq  \dfrac{r+t-1}{t-1}$, then $hA=[0,hy]$.
	\end{lemma}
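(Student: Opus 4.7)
The plan is to mirror the proof of Lemma \ref{Lemma II} but upgrade the two length-$2$ subsets $\{x-2,x-1\}$ and $\{x+r,x+r+1\}$ to length-$t$ subsets, so that the hypothesis $h\geq (r+t-1)/(t-1)$ makes the consecutive sum intervals overlap.

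Concretely, I would first define $A_1=[x-t,x-1]$ and $A_2=[x+r,x+r+t-1]$. These are both contained in $A$: $A_1\subseteq [0,x-1]$ because $t\leq x$ gives $x-t\geq 0$, and $A_2\subseteq [x+r,y]$ because $x+r\leq y-t+1$ gives $x+r+t-1\leq y$. In particular $h(A_1\cup A_2)\subseteq hA$ and, trivially, $hA\supseteq [0,h(x-1)]\cup[h(x+r),hy]$.

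Next I would compute, for each $l\in\{0,1,\dots,h\}$,
\[
(h-l)A_1+lA_2=[hx-ht+l(r+t),\,hx-h+l(r+t)],
\]
an arithmetic interval of length $h(t-1)+1$. The crucial step is to show that these $h+1$ intervals overlap: the start of the $(l+1)$-st interval is $hx-ht+l(r+t)+(r+t)$, and the end of the $l$-th interval plus $1$ is $hx-h+l(r+t)+1$. The overlap condition $r+t\leq h(t-1)+1$ is exactly the hypothesis $h\geq (r+t-1)/(t-1)$. Therefore
\[
h(A_1\cup A_2)=\bigcup_{l=0}^{h}\bigl[hx-ht+l(r+t),\,hx-h+l(r+t)\bigr]=[h(x-t),\,h(x+r+t-1)].
\]

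Finally, I would glue everything together. Since $t\geq 1$ we have $h(x-t)\leq h(x-1)$ and $h(x+r)\leq h(x+r+t-1)$, so
\[
hA\supseteq [0,h(x-1)]\cup[h(x-t),h(x+r+t-1)]\cup[h(x+r),hy]=[0,hy],
\]
using $x+r+t-1\leq y$ for the last equality. Combined with the obvious inclusion $hA\subseteq[0,hy]$, this yields $hA=[0,hy]$. The proof is essentially a direct computation; the only ``hard'' part is choosing $A_1$ and $A_2$ with length $t$ so that the interval-overlap inequality reduces precisely to the hypothesis on $h$, and verifying that $A_1,A_2\subseteq A$ uses both $t\leq x$ and $x+r\leq y-t+1$.
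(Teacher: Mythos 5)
Your proof is correct and follows essentially the same route as the paper's: the same choice of $A_1=[x-t,x-1]$ and $A_2=[x+r,x+r+t-1]$, the same computation of $(h-l)A_1+lA_2$, and the same overlap condition $r+t\leq h(t-1)+1$ equivalent to $h\geq (r+t-1)/(t-1)$. Your explicit verification that $A_1,A_2\subseteq A$ and of the final gluing step is slightly more detailed than the paper's, but the argument is identical in substance.
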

	
	\begin{proof}
		Clearly, $[0, h(x-1) ] \cup [h(x+r), hy] \subseteq hA $. Let $A_{1}=\{ x-t, \ldots, x-2, x-1 \}$ and $A_{2}=\{ x+r, x+r+1, \ldots, x+r+t-1 \}$ be subsets of $A$. Then $h(A_{1} \cup A_{2}) \subseteq hA$, where
		\begin{align*}
			h(A_{1} \cup A_{2})&= \bigcup_{l=0}^{h} \left( \left( h-l\right) A_{1} + lA_{2}\right) \\
			&= \bigcup_{l=0}^{h} \left( [(h-l)(x-t),(h-l)(x-1)]+[l(x+r),l(x+r+t-1)] \right)\\
			& = \bigcup_{l=0}^{h} [hx-ht+l(r+t), hx-h+l(r+t)].
		\end{align*}
		Since $h \geq  \dfrac{r+t-1}{t-1}$, we have $hx-ht+(l+1)(r+t) \leq hx-h+l(r+t)+1$. Therefore,\[h(A_{1} \cup A_{2})=\bigcup_{l=0}^{h} [hx-ht+l(r+t), hx-h+l(r+t)]=[hx-ht,hx+h(r+t-1)].\] Hence, $hA=[0,hy]$.
	\end{proof}
	
	\begin{remark}
		Putting $t=2$ in Lemma \ref{Lemma III} we get Lemma \ref{Lemma II}.
	\end{remark}	
	For the upcoming lemmas and propositions, we assume $r(m,n)$ be the least nonnegative residue of $m$ modulo $n$, where $m$ and $n$ are positive integers.
	\begin{lemma}\label{Lemma IV}
		Let $A= \{ a_{0},a_{1}, \ldots, a_{k-1} \}$ be a set of integers such that $0=a_{0}<a_{1}<\cdots < a_{k-1}$. If $0<a_{t} \leq m \leq ha_{t}-1$, then $m\in hA$, provided $a_{t}+r(m,a_{t})\in 2A$.
	\end{lemma}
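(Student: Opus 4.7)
The plan is to use the division algorithm to decompose $m$ in terms of $a_t$ and then absorb the remainder into $2A$ via the hypothesis, padding the rest with copies of $a_0 = 0$.

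First I would write $m = qa_t + r$ where $r = r(m,a_t)$, so $0 \le r < a_t$. From the two-sided bound $a_t \le m \le ha_t - 1$ I can read off the range of the quotient: the lower bound $m \ge a_t$ together with $r < a_t$ forces $q \ge 1$, and the upper bound $qa_t \le qa_t + r = m \le ha_t - 1 < ha_t$ forces $q \le h-1$. So $1 \le q \le h-1$.

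Next I would invoke the hypothesis that $a_t + r \in 2A$, writing $a_t + r = a_i + a_j$ for some (not necessarily distinct) $a_i, a_j \in A$. Then
\begin{align*}
m = qa_t + r = (q-1)a_t + (a_t + r) = (q-1)a_t + a_i + a_j.
\end{align*}
This expresses $m$ as a sum of $(q-1) + 2 = q+1$ elements of $A$. Since $q+1 \le h$ and $a_0 = 0 \in A$, I can pad with $h - q - 1 \ge 0$ copies of $a_0$ to obtain
\begin{align*}
m = (q-1)a_t + a_i + a_j + (h-q-1)a_0,
\end{align*}
a sum of exactly $h$ elements of $A$, showing $m \in hA$.

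There is no genuine obstacle here; the argument is a direct consequence of the division algorithm together with the fact that $a_0 = 0$ lets us freely inflate the number of summands. The only points worth emphasizing are (i) the strict inequality $m \le ha_t - 1$ (rather than $\le ha_t$) is what guarantees $q \le h-1$ and hence leaves room to include the two summands $a_i, a_j$, and (ii) the lower bound $m \ge a_t$ is used to ensure $q \ge 1$, so that $(q-1)a_t$ is a legitimate (possibly empty) sum of copies of $a_t \in A$.
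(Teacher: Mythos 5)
Your proposal is correct and is essentially identical to the paper's own argument: both write $m=(q-1)a_t+(a_t+r)$ with $q=\lfloor m/a_t\rfloor\in[1,h-1]$, replace $a_t+r$ by $a_i+a_j\in 2A$, and pad with $h-q-1$ copies of $a_0=0$. No issues.
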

	\begin{proof}
		Since $0<a_{t} \leq m \leq ha_{t}-1$, we have  $1 \leq \lfloor \frac{m}{a_{t}}\rfloor \leq h-1$. Also, $a_{t}+r(m,a_{t}) = a_{i}+a_{j} \in 2A$,  for some $i,j$. So, we can write $m$ as
		\[
		m =
		\underbrace{a_{t}+ \cdots+ a_{t}}_{{\lfloor \frac{m}{a_{t}}\rfloor-1} ~ times} + a_{i}+a_{j}  +\underbrace{0+ \cdots +0}_{h-1-\lfloor \frac{m}{a_{t}}\rfloor ~ times}.
		\]
		Hence, we get the lemma.
	\end{proof}

	\section{Propositions}\label{prop}
	
	\begin{prop}\label{Proposition I}
		Let $x$, $y$ and $h$ be positive integers with $y \geq 2x-1$.
		\begin{enumerate}
			\item[\upshape(a)] If $A = \{0\}  \cup [x,y]$,  then $hA= \{0\}  \cup [x,hy]$.
			
			\item[\upshape(b)] If $A =  [0,y-x] \cup \{y\}$, then $hA=  [0,hy-x] \cup \{hy\}$.
		\end{enumerate}
	\end{prop}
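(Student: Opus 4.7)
The plan is to prove (a) by a direct decomposition according to how many summands are drawn from the block $[x,y]$, and then to deduce (b) from (a) by a reflection argument.

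For part (a), I would write any element of $hA$ as a sum of $h$ elements of $A=\{0\}\cup[x,y]$. If exactly $j$ of these summands lie in $[x,y]$ (with $0\le j\le h$) and the remaining $h-j$ are zero, the resulting sum is an element of $j[x,y]$. Since $[x,y]$ is an arithmetic progression of common difference $1$, a standard computation (or Theorem \ref{THeorem Nathanson} applied to $[x,y]$) gives $j[x,y]=[jx,jy]$ for every $j\ge 1$, while $j=0$ contributes only $\{0\}$. Thus
\[
hA \;=\; \{0\}\;\cup\;\bigcup_{j=1}^{h}[jx,jy].
\]
The remaining step is to show that this union equals $[x,hy]$. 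For this I need $(j+1)x\le jy+1$ for each $1\le j\le h-1$, i.e., $j(y-x)\ge x-1$. The hypothesis $y\ge 2x-1$ yields $y-x\ge x-1$, so $j(y-x)\ge x-1$ already for $j=1$, and hence for all $j\ge 1$. Consequently the intervals $[jx,jy]$ chain together without gaps and telescope to $[x,hy]$, giving $hA=\{0\}\cup[x,hy]$.

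For part (b), I would invoke the reflection trick. Let $A'=y-A=\{y-a:a\in A\}$. Then $A'=[0,y-x]\cup\{y\}$ reflected becomes $\{0\}\cup[x,y]$, which is precisely the set treated in (a). Since the $h$-fold sumset transforms under affine maps by $h(y-A)=hy-hA$, applying (a) to $A'$ gives
\[
hA \;=\; hy - hA' \;=\; hy - \bigl(\{0\}\cup[x,hy]\bigr)\;=\;\{hy\}\cup[0,hy-x],
\]
which is the claim of (b).

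I do not expect a real obstacle here: the content is an interval-chaining argument supported by the single inequality $y\ge 2x-1$, together with the dilation/reflection invariance of $h$-fold sumsets already recorded in the introduction. The only point to handle carefully is the justification that $j[x,y]=[jx,jy]$ for all $j\ge 1$, which I would either quote from Theorem \ref{THeorem Nathanson} (minimum sumset size attained by an AP) or verify in one line by noting $j[x,y]\subseteq[jx,jy]$ and that the two sets have the same cardinality $jy-jx+1$.
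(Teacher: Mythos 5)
Your proof is correct. Part (b) is exactly the paper's argument: reflect via $A=y-(\{0\}\cup[x,y])$ and use translation/dilation invariance of $hA$. For part (a) you take a genuinely different (and somewhat cleaner) route. The paper writes $[x,hy]=[x,hx-1]\cup[hx,hy]$, gets $[hx,hy]=h[x,y]\subseteq hA$ for free, and fills in $[x,hx-1]$ by invoking Lemma \ref{Lemma IV}: for $m$ in that range, $x+r(m,x)\in[x,2x-1]\subseteq[x,y]\subseteq 2A$ precisely because $y\ge 2x-1$, so $m$ is a sum of copies of $x$ plus one element of $2A$ plus zeros. You instead stratify $hA$ by the number $j$ of nonzero summands, obtaining the exact identity $hA=\{0\}\cup\bigcup_{j=1}^{h}[jx,jy]$, and then chain the intervals using $(j+1)x\le jy+1$, which again reduces to $y\ge 2x-1$ (note $j(y-x)\ge y-x\ge x-1$ needs $j\ge1$, which you have). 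Your decomposition has the small advantage of delivering both inclusions at once --- the containment $hA\subseteq\{0\}\cup[x,hy]$ is left implicit in the paper --- and of not depending on Lemma \ref{Lemma IV}; the paper's version is the one that generalizes to the less structured sets appearing in Propositions \ref{Proposition II}--\ref{Proposition IV}, where the remainder criterion is used repeatedly. Your one-line justification of $j[x,y]=[jx,jy]$ via the cardinality bound of Theorem \ref{THeorem Nathanson} is fine.
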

	\begin{proof}
		Let $A = \{0\} \cup [x,y]$. Then $\{0\} \cup [x,y] \cup [hx,hy] \subseteq hA$. Next, we show $[x,hx-1] \subseteq hA$. Since $y\geq 2x-1$, we have $$x+r(m,x) \in 2A,$$ where $m \in [x,hx-1]$.  
		Therefore, by Lemma \ref{Lemma IV}, we get $[x,hx-1] \subseteq hA$. Thus, $hA = \{0\} \cup [x,hy]$.  This proves $(a)$. Now,  if $A = [0,y-x] \cup \{y\}$, then $A = y - (\{0\} - [x,y])$. Therefore,  $$hA = h([0,y-x] \cup \{x\}) = h(y-(\{0\} \cup [x,y])) = hy - (\{0\} \cup [x,hy]) = [0,hy-x] \cup \{hy\}.$$ This completes the proof of the proposition.
	\end{proof}
	Now, we compute $|hA|$ when $A=[0,k+2] \setminus \{ x, y, z \}$ with $|A|=k \geq 5$. Since $|hA|$ is translation  invariant, so we assume $\min(A) = 0$. 
	If $z=k+2$, then $A=[0,k+1] \setminus \{ x, y \}$ and this was already studied by Tang and Xing \cite{TANG2021}. Therefore,  we  assume $a_{0} = 0$, $a_{k-1}=k+2$, and $1\leq x <y< z\leq k+1$ in $A = \{a_{0}, a_{1}, \ldots,a_{k-1}\} = [0,k+2] \setminus \{x,y,z\}$.

	\begin{prop}\label{Proposition II}
		Let $ h\geq 2$ and $k \geq 5$ be positive integers. Let \[A =[0,k+2] \setminus \{ x,x+1,x+2 \},\]  where $x \in [1,k-1]$. 
		\begin{enumerate}
			\item[\upshape(1)] If $x\in \{1,k-1\}$, then $\left|hA\right|=h(k+2)-2$.
			
			\item[\upshape(2)] If $x\in\{2,k-2\}$, then
			\[
			\left|hA\right| =
			\begin{cases}
				h(k+2)+1, & \text{ if } h \geq 4~\text{and}~ k\geq 5; \\
				3k+6,   & \text{ if }  h=3~\text{and}~  k\geq 5; \\
				2k+3,     & \text{ if }  h=2~\text{and}~ k\geq 6;\\
				12 ,      & \text{ if } h=2~\text{and}~  k= 5.
			\end{cases}
			\]
			
			\item[\upshape(3)] If $x \in [3,k-3]$, then
			\[
			\left|hA\right| =
			\begin{cases}
				h(k+2)+1, & \text{if $h\geq 3$ and  $k \geq 6$}; \\
				2k+5,     &  \text{if $h=2$, $ x \in [4,k-4]$ and  $k\geq 8$}; \\
				2k+4,   & \text{if $h=2$, $x\in \{3, k-3\}$ and  $k\geq 6$}.
			\end{cases}
			\]
		\end{enumerate}
	\end{prop}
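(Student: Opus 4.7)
The plan is to analyze the set $A=[0,k+2]\setminus\{x,x+1,x+2\}$ case by case according to the value of $x$, always writing $A = A_1 \cup A_2$ as a disjoint union of two intervals and expanding
\[
hA \;=\; \bigcup_{\ell=0}^{h}\bigl((h-\ell)A_1 + \ell A_2\bigr),
\]
which expresses $hA$ as a union of at most $h+1$ intervals whose successive overlap (or adjacency) can be read off directly from the endpoints. For the ``large $h$'' regimes, Lemmas~\ref{Lemma II} and \ref{Lemma III} collapse this union to a single interval, and only the small-$h$ cases need a hands-on computation.

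For part (1), $x=1$ gives $A=\{0\}\cup[4,k+2]$, and the condition $y\ge 2x-1$ of Proposition~\ref{Proposition I}(a) becomes $k+2\ge 7$, which holds since $k\ge 5$. Hence $hA=\{0\}\cup[4,h(k+2)]$ and $|hA|=h(k+2)-2$. The case $x=k-1$ is the reflection $A\mapsto(k+2)-A$, reducing to Proposition~\ref{Proposition I}(b) with the same count.

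For part (2), $x=2$ gives $A=[0,1]\cup[5,k+2]$, which matches Lemma~\ref{Lemma II} with $r=3$. For $h\ge 4$ the lemma yields $hA=[0,h(k+2)]$, of size $h(k+2)+1$. For $h\in\{2,3\}$ one computes the union directly: for $h=3$ the four constituent intervals are $[0,3]$, $[5,k+4]$, $[10,2k+5]$, $[15,3k+6]$, and the adjacency conditions all hold once $k\ge 5$, leaving precisely the gap $\{4\}$ and thus $|3A|=3k+6$. For $h=2$ one gets $\{0,1,2\}\cup[5,k+3]\cup[10,2k+4]$; these glue into $\{0,1,2\}\cup[5,2k+4]$ exactly when $k+3\ge 9$, i.e.\ $k\ge 6$, giving $|2A|=2k+3$, while $k=5$ is verified by an explicit listing. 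The case $x=k-2$ is handled by the reflection $a\mapsto(k+2)-a$.

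For part (3), $x\in[3,k-3]$ gives $A=[0,x-1]\cup[x+3,k+2]$. Here I would invoke Lemma~\ref{Lemma III} with $t=3$: the hypotheses $2\le t\le x$ and $x+r\le y-t+1$ read $x\ge 3$ and $x\le k-3$, both in force, and the numerical threshold $h\ge(r+t-1)/(t-1)=5/2$ is $h\ge 3$. This immediately gives $hA=[0,h(k+2)]$ and $|hA|=h(k+2)+1$ for all $h\ge 3$. For $h=2$ the three contributing intervals are $[0,2x-2]$, $[x+3,k+x+1]$, $[2x+6,2k+4]$: the first two are adjacent precisely when $2x-2\ge x+2$, i.e.\ $x\ge 4$, and the last two precisely when $k+x+1\ge 2x+5$, i.e.\ $x\le k-4$. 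Combining these with the numerical hypotheses isolates the two subcases $x\in[4,k-4]$ (with $k\ge 8$, giving $2A=[0,2k+4]$ of size $2k+5$) and $x\in\{3,k-3\}$ (where exactly one gap of size one survives, giving size $2k+4$). The principal obstacle is the bookkeeping in this $h=2$ analysis: one must track precisely which integers remain unhit at the boundary values $x\in\{3,4,k-4,k-3\}$ and at small $k$, since a single misidentified gap changes the cardinality by one; the higher-$h$ parts, by contrast, follow almost mechanically from Lemmas~\ref{Lemma II}--\ref{Lemma III}.
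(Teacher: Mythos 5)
Your overall strategy is the same as the paper's: split $A$ into the two intervals on either side of the deleted block, reduce the large-$h$ cases to Proposition~\ref{Proposition I} and Lemmas~\ref{Lemma II}--\ref{Lemma III}, and compute $hA=\bigcup_{\ell=0}^{h}\bigl((h-\ell)A_1+\ell A_2\bigr)$ as a union of intervals for small $h$. Parts (1), (2) and the $h\geq 3$ half of part (3) are correct and match the paper essentially line for line; your unified $h=2$ computation in part (3), with the explicit adjacency criteria $x\geq 4$ (for the first pair of intervals) and $x\leq k-4$ (for the second pair), is in fact tidier than the paper's three-way case split ($x=3$, $x=k-3$, and Lemma~\ref{Lemma III} with $t=4$ for the middle range).

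However, your own adjacency criteria expose a genuine error in the last subcase, which you assert without checking. When $k=6$ the two boundary values coincide, $x=3=k-3$, and \emph{neither} criterion holds: the three intervals are $[0,4]$, $[6,10]$, $[12,16]$, so \emph{two} gaps survive (namely $5$ and $11$) and $|2A|=15=2k+3$, not $2k+4$. (Concretely, $A=\{0,1,2,6,7,8\}$ and $5,11\notin 2A$.) Your sentence ``exactly one gap of size one survives'' is therefore false at $k=6$; it is correct only for $k\geq 7$, where $x=3$ and $x=k-3$ are distinct and each destroys exactly one of the two adjacencies. You are in good company: the paper's proof makes the same mistake, claiming $2A=[0,4]\cup[6,2(k+2)]$ for $A=[0,2]\cup[6,k+2]$, which requires $k+4\geq 11$, i.e.\ $k\geq 7$. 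So the third line of part (3) of the proposition should carry the hypothesis $k\geq 7$, with the exceptional value $|2A|=2k+3$ at $h=2$, $k=6$, $x=3$. Apart from this boundary case your argument is complete and correct.
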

	
	\begin{proof}
		\begin{enumerate}
			\item [\upshape(1).] 
			\begin{enumerate}
				\item [\upshape(a)] If  $x=1$, then $A=\{0\} \cup [4,k+2]$. By Proposition \ref{Proposition I}, we have
				\[hA=\{0\} \cup [4, h(k+2)],\]
				which gives
				\[|hA|=h(k+2)-2.\]  
				
				\item [\upshape(b)]  If $x=k-1$, then we have $A = [0,k-2] \cup \{k+2\} = \{k+2\} - ( \{0\} \cup [4,k+2])$. Since cardinality of $h$-fold sumset is translation invariant, we have \[\left|hA\right|= \left|h(\{k+2\} - ( \{0\} \cup [4,k+2]))\right| = \left|h( \{0\} \cup [4,k+2])\right|= h(k+2)-2.\]
			\end{enumerate}
			\item [\upshape(2).] \begin{enumerate}
				\item [\upshape(a)]  If   $x=2$, then $A=[0,1] \cup [5,k+2]$. For $h=2$, it is easy to see that 
				\[
				2A =
				\begin{cases}
					[0,2] \cup [5,2(k+2)], & \text{if $k \geq 6$}; \\
					[0,2] \cup [5,8] \cup [10,14], & \text{if $k=5$}. \\
				\end{cases}
				\] 
				Now, assume $h \geq 3$.  Clearly, $ [0,h] \cup [5h,h(k+2)] \subseteq hA$. Next, we show that $[5,5h-1] \subseteq hA$. Suppose $m$ be a positive integer such that $5 \leq m \leq 5h-1$. Then \[1 \leq \Big\lfloor \frac{m}{5}\Big\rfloor \leq h-1 \  \text{and} \  r(m,5) \in [0,4].\]  Note that,  $5+r(m,5) \in 2A$ when  $r(m,5) \in [0,3]$.  Lemma \ref{Lemma IV} implies that $m \in hA$ for $r(m,5) \in [0,3]$. For  $r(m,5)=4$ and   $\lfloor \frac{m}{5}\rfloor \geq 2$, we have  
				\[ m= \underbrace{5+ \cdots +5}_{\lfloor \frac{m}{5}\rfloor -2~times} +7+7+ \underbrace{ 0+ \cdots +0}_{h-{\lfloor \frac{m}{5}\rfloor}~times} \in hA.\]
				For $r(m,5)=4$ and  $\lfloor \frac{m}{5}\rfloor=1$, we get  $m = 9$, where
				$9=7+1+1 \in hA$.  Therefore $hA=[0,h] \cup [5,h(k+2)]$ for all $h \geq 3$.
				Hence,
				\[
				hA =
				\begin{cases}
					[0,h(k+2)],                    & \text{ if }  h \geq 4 ~\text{and}~ k\geq 5;  \\
					[0,3] \cup [5,3(k+2)],          & \text{ if }  h=3~ \text{and}~  k\geq 5; \\
					[0,2] \cup [5,2(k+2)],          & \text{ if }  h=2 ~\text{and}~ k\geq 6;\\
					[0,2] \cup [5,8] \cup [10,14],  & \text{ if }  h=2~ \text{and}~  k= 5,
				\end{cases}
				\]
				and
				\[
				\left|hA\right| =
				\begin{cases}
					h(k+2)+1,  & \text{ if }  h \geq 4 \text{ and } k\geq 5; \\
					3(k+2),    & \text{ if }  h=3\text{ and }  k\geq 5; \\
					2k+3,      & \text{ if }  h=2\text{ and } k\geq 6;\\
					12,        & \text{ if }  h=2\text{ and }  k= 5.
				\end{cases}
				\]
				\item [\upshape(b)] If $x=k-2$, then we can write $A=(k+2)- ([0,1] \cup [5,k+2])$, which is a translation of $[0,1] \cup [5,k+2]$, the set for which we have the result. So, we have the result for $x=k-2$ also, as  $\left|hA\right|$ is translation invariant.
			\end{enumerate}
			\item  [\upshape(3).] If $3\leq x \leq k-3$, then $A=[0,x-1] \cup [x+3,k+2]$ and $k \geq 6$.	Using  Lemma \ref{Lemma III} (for $t=r=3$), we obtain $$hA=[0,h(k+2)] ~ \text{for}~h\geq 3.$$\\
			\hspace{0.5 cm} Now, assume $h=2$. If $x=3$,  then $A=[0,2] \cup [6,k+2]$, and if $x=k-3$, then $A= (k+2)-([0,2] \cup [6,k+2])$. For $A=[0,2] \cup [6,k+2]$, we have $2A=[0,4] \cup [6,2(k+2)]$ and hence $\left|2A\right|=2(k+2)$. Similarly, if $A= (k+2)-([0,2] \cup [6,k+2])$, then $\left|2A\right|=2(k+2)$.
			If $4\leq x\leq k-4$, then $A=[0,x-1] \cup [x+3,k+2]$ and $k \geq 8$.  Using Lemma \ref{Lemma III} (for $r=3, t=4$),  we have  $2A=[0,2(k+2)]$. Hence
			\[|hA| =
			\begin{cases}
				h(k+2)+1, & \text{if $h\geq 3$ and  $k \geq 6$}; \\
				2k+5, &  \text{if $h=2$, $ x \in [4,k-4]$ and  $k\geq 8$}; \\
				2k+4,  & \text{if $h=2$, $x\in\{3, k-3\}$ and  $k\geq 6$}.
			\end{cases}\]
			
		\end{enumerate}
		This completes the proof of the proposition.
	\end{proof}
	
	Now, if exactly two of $x$, $y$ and $z$ are consecutive integers, then  it is sufficient to assume $x$ and $y$ are consecutive integers, and for the case when $y$ and  $z$ are consecutive integers, we take $A=(k+2) - ([0,k+2] \setminus \{x,x+1,z\})$.
	
	\begin{prop}\label{Proposition III}
		Let $h\geq 2$ and  $k \geq 5$  be positive integers. Let \[A=[0,k+2] \setminus \{x,y,z\},\] where $y=x+1$ and $1 \leq x \leq z-3 \leq k-2$.
		
		\begin{enumerate}
			\item[\upshape(1)] If   $\{ x,y,z \}=\{ 1,2,4 \}$,  then $\left|hA\right|=h(k+2)-2$.
			
			\item[\upshape(2)]  If  $\{ x,y,z \}=\{ 1,2, k+1 \}$,   then $\left|hA\right|=h(k+2)-2$.
			
			\item[\upshape(3)] If  $\{ x,y,z \}=\lbrace 1,2,i \rbrace$, where $5 \leq i \leq k$,   then
			\[
			|hA| =
			\begin{cases}
				h(k+2)-2, & \text{if $i=5 $};\\
				h(k+2)-1, & \text{if $i \not= 5$}.	
			\end{cases}
			\]
			
			\item[\upshape(4)] If  $\{ x,y,z\}=\{2,3,5\}$, then  	
			\[
			\left|hA\right| =
			\begin{cases}
				h(k+2)+1, & \text{if $h \geq 3$ and $k \geq 5$};\\
				2k+4, & \text{if $h=2$ and $k\geq 6$};\\
				13, & \text{if $h=2$ and $ k=5$}.
			\end{cases}
			\]
			
			\item[\upshape(5)] If  $\{ x,y,z\}=\{ 2,3,k+1 \}$, then 	\[
			\left|hA\right| =
			\begin{cases}
				
				h(k+2), & \text{if $h \geq 3$ and $k \geq 5$};\\
				2k+3, & \text{if $h=2$ and $ k\geq 5$}.
			\end{cases}
			\]
			
			\item[\upshape(6)] If  $\{ x,y,z \}=\{2,3,i\}$, where $6\leq i \leq k$, then \[
			\left|hA\right| =
			\begin{cases}	
				h(k+2)+1, & \text{if $h \geq 3$ and $k \geq 6$};\\
				2k+4, & \text{if $h=2$ and $ k\geq 6$}.
			\end{cases}
			\]
			
			\item[\upshape(7)] If  $\{x,y,z\}=\{k-2,k-1,k+1\}$, then $\left|hA\right|=h(k+2)-1$.
			
			\item[\upshape(8)] If   $\{x,y,z\}=\{i,i+1, k+1\}$, where  $3 \leq i \leq k-3$,  then $\left|hA\right|=h(k+2)$.
			
			\item[\upshape(9)] If $\{x,y,z\}=\{i,i+1,i+3\}$, where $ 3 \leq i \leq k-3$, then  	\[ |hA| = \begin{cases}
				h(k+2)+1, & ~\text{if}~ h\geq 3;\\
				2k+5, & ~\text{if}~ h=2 ~\text{and}~ 3\leq i \leq k-4;\\
				2k+4, &~\text{if}~h=2 ~\text{and}~ i=k-3.	
			\end{cases}\]
			
			\item[\upshape(10)]  If $\{x,y,z\}=\{i,i+1,j\}$, where  $3\leq i  \leq j-4 \leq k-4$, then $\left|hA\right|=h(k+2)+1$.
		\end{enumerate}
	\end{prop}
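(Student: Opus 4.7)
The plan is to prove each of the ten parts separately by giving an explicit description of $hA$ as a union of intervals and isolated points, and then reading off $|hA|$. In every subcase I first write $A=[0,k+2]\setminus\{x,y,z\}$ as a union of one or two intervals together with at most two isolated points; the decomposition I then use depends on the shape. When $A$ begins with an isolated $0$ (as in parts (1)--(3)), I set $A'=A\setminus\{0\}$ and use $hA=\{0\}\cup\bigcup_{j=1}^{h}jA'$; when $A$ ends with an isolated $k+2$ (as in parts (2), (5), (7), (8)), I split off $k+2$ and treat the number of copies of $k+2$ used as an auxiliary parameter; and when $A$ is the union of two intervals separated by a short gap (as in parts (8)--(10)), I apply Lemma \ref{Lemma III} (or Lemma \ref{Lemma II}) directly to that two-interval piece. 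For part (7), whose removed set is the reflection of $\{1,3,4\}$, I work instead with $(k+2)-A=\{0,2\}\cup[5,k+2]$ and apply the ``isolated $0$'' strategy there, using that $|hA|$ is reflection-invariant.

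For parts (8), (10) and the inner piece of several others, the relevant two-interval set has the form $[0,x-1]\cup[x+r,y]$ with $r\leq 2$, so Lemma \ref{Lemma III} with a suitable $t\in\{2,3\}$ yields its $h$-fold sumset equal to $[0,hy]$ as soon as $h\geq(r+t-1)/(t-1)$, and any isolated $\{k+2\}$ peeled off from $A$ then contributes the precise deviation of $|hA|$ from $h(k+2)+1$. For parts (1)--(3), after translating $A'$ to start at $0$, Proposition \ref{Proposition I}(a) computes each $jA'$ explicitly; the union over $j\in[1,h]$ produces a long interval together with a few isolated points, and the claim follows after verifying by elementary minimality that $1$ and $2$ (and in part (1) also $4$, and in part (3) with $i=5$ also $5$) fail to lie in $hA$. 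Parts (4)--(6) are analogous but start from $A=\{0,1\}\cup A'$; the joint presence of $0$ and $1$ fills in an initial segment via Lemma \ref{Lemma IV} once $h\geq 3$. Part (9) is handled by a single application of Lemma \ref{Lemma III} to the two intervals flanking the cluster $\{i,i+1,i+3\}$.

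The main obstacle is the small-$h$ behaviour. The hypothesis $h\geq r+1$ of Lemma \ref{Lemma II} (equivalently the corresponding bound in Lemma \ref{Lemma III}) fails for $h=2$ whenever the gap length is at least $2$, and $2A$ then does not fill in completely. This is where parts (4) with $k=5$ (giving the exceptional value $13$), (6) with $h=2$, (9) with $i=k-3$ versus $i\leq k-4$, and (5) with $h=2$ each require a direct enumeration of the pairwise sums to pin down exactly which elements of $[0,2(k+2)]$ are missing; a similar but easier direct computation is needed for part (3) with $i=5$, and a separate $h=3$ verification is required in part (5). A secondary bookkeeping challenge is matching the restrictions ``$k\geq 5$'' or ``$k\geq 6$'' stated in each part to the smallest value for which the interval joins used in the proof actually hold: for example, the join $[5,k+2]\cup[8,2(k+2)]=[5,2(k+2)]$ needed in part (1) requires $k\geq 5$, whereas the analogous join in part (6) requires $k\geq 6$.
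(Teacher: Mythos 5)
Your overall strategy---write $A$ as intervals plus isolated points, compute the interval pieces with Proposition \ref{Proposition I} and Lemmas \ref{Lemma II}--\ref{Lemma IV}, and fall back on direct enumeration for the small-$h$ exceptions---is the same as the paper's, and your treatment of parts (1)--(3), (7) via the decomposition $hA=\{0\}\cup\bigcup_{j=1}^{h}jA'$ (and reflection for (7)) is a legitimate, slightly different packaging of the same computations. Your analysis of part (8), peeling off the isolated $k+2$ and tracking how many copies of it are used, also goes through and matches the paper's Proposition \ref{Proposition I} argument for the top end.

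The genuine gap is in part (9). You propose to handle $\{x,y,z\}=\{i,i+1,i+3\}$ for $h\geq 3$ by ``a single application of Lemma \ref{Lemma III} to the two intervals flanking the cluster,'' i.e.\ to $[0,i-1]\cup[i+4,k+2]$, which has gap $r=4$. Lemma \ref{Lemma III} then needs $t\leq\min\{i,\,k-1-i\}$ and $h\geq(t+3)/(t-1)$, so when $\min\{i,k-1-i\}=2$ (which happens for $i=k-3$, allowed for every $k\geq 6$, and for $i=3$ when $k=6$) you are forced to take $t=2$ and hence $h\geq 5$; the cases $h=3,4$ are not covered, and they cannot be, because the two flanking intervals genuinely do not generate all of $[0,h(k+2)]$. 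Concretely, for $i=k-3$ and $h=3$ the $3$-fold sumset of $[0,k-4]\cup[k+1,k+2]$ is $\bigcup_{l=0}^{3}[l(k+1),\,(3-l)(k-4)+l(k+2)]$, which omits $3k+1$ and $3k+2$ no matter how large $k$ is; these elements are only reachable as $(k-1)+(k+1)+(k+1)$ and $(k-1)+(k+1)+(k+2)$, i.e.\ they require the isolated point $i+2=k-1$. This is precisely why the paper's proof of (9) computes $h(\{i+2\}\cup[i+4,k+2])$ and $h([0,i-1]\cup\{i+2\})$ via Proposition \ref{Proposition I} and then exhibits explicit representations of $h(i+2)-2$, $h(i+2)-1$ and $h(i+2)+1$ using $i+2$. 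You flag the $h=2$ subtleties of (9) but not this $h\in\{3,4\}$ boundary failure; to repair your plan you must incorporate the point $i+2$ into the argument rather than discard it.
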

	
	\begin{proof}
		\begin{enumerate}
			\item [\upshape(1).] If $\{x,y,z\}=\{1,2,4\}$, then $A=\{0,3\} \cup [5,k+2]$. It is easy to see that \[ [5h,h(k+2)] \subseteq hA.\]
			Suppose $m$ be a positive integer such that $5\leq m \leq 5h-1$, and $r(m,5)$ be the least nonnegative residue of $m$ modulo $5$. Note that $5+r(m,5) \in 2A$. So, by Lemma \ref{Lemma IV}, we have  $m \in hA$. Hence, \[hA=\{0,3\} \cup [5,h(k+2)],\] and \[\left|hA\right|=h(k+2)-2 \text{ for } h\geq 2 \text{ and } k \geq 5.\]
			
			\item [\upshape(2).]If  $\{x,y,z\}=\{1,2, k+1\}$, then $A=\{0\} \cup [3,k] \cup \{k+2\}$.
			Since $k\geq 5$, by Proposition \ref{Proposition I}, we have \[h(\{0\} \cup [3,k]) =\{0\} \cup [3,hk] \subseteq hA.\]
			Also, $h([3,k] \cup \{k+2\}) \subseteq hA$, where
			\begin{align*}
				h( [3,k] \cup \{k+2\})&= h((k+2)- (\{0\} \cup [2,k-1]))\\
				&= h(k+2)-h(\lbrace 0 \rbrace \cup [2,k-1])\\
				&=h(k+2)-(\lbrace 0 \rbrace \cup [2,h(k-1)]) ~(\text{by Proposition \ref{Proposition I})}\\
				&= [3h,h(k+2)-2] \cup \{ h(k+2) \}.
			\end{align*}
			Hence, \[hA= \lbrace 0 \rbrace \cup [3,h(k+2)-2] \cup \{ h(k+2) \},\] and \[\left|hA\right|=h(k+2)-2, ~ \text{for all}~ h\geq 2 \text{  and } k \geq 5.\]

			\item[\upshape(3).]	 If  $\{x,y,z\}=\{1,2,i\}$, where $5 \leq i \leq k$, then $A=\{ 0 \} \cup [3,i-1] \cup [i+1,k+2] $. For $h=2$, it is easy to see that 
			\[2A=\begin{cases}
				\lbrace 0,3,4\rbrace \cup [6,2(k+2)] & \text{ if } i=5 \\
				\lbrace 0 \rbrace \cup [3,2(k+2)] & \text{ if } i\geq 6.
			\end{cases}\] Now, assume $h\geq 3$. Using  Lemma \ref{Lemma III}, we have 
			\begin{align*}
				h([3,i-1] \cup [i+1,k+2] )&=h(([0,i-4] \cup [i-2,k-1])  + 3)\\
				&=h([0,i-4] \cup [i-2,k-1] ) +3h\\
				&=[0,h(k-1)]+3h \\
				&=	[3h,h(k+2)].
			\end{align*}
			Next, we show $[6,3h-1] \subseteq hA$. Suppose $m$ be a positive integer such that  $6 \leq  m \leq 3h-1$.   Then \[2 \leq \Big\lfloor \frac{m}{3} \Big\rfloor \leq h-1 \ \text{and} \ r(m,3) \in [0,2].\]
			Note that,  $3+r(m,3) \in 2A$ for $r(m,3)\in \{0,1\}$, and so by Lemma \ref{Lemma IV}, $m \in hA$. For $r(m,3)=2$, we write  
			\[
			m=\underbrace{3+ \cdots +3}_{{\lfloor \frac{m}{3}\rfloor}-2  ~times}+ 4+4+ \underbrace{0+\cdots+0}_{h-{\lfloor \frac{m}{3}\rfloor} ~times} \in hA.
			\]  Hence,
			\[
			hA =
			\begin{cases}
				\{0,3,4\} \cup [6,h(k+2)],  & \text{ if } i=5;\\
				\{0\} \cup [3,h(k+2)], & \text{ if } i \neq 5,
			\end{cases}
			\]
			and
			\[
			|hA| =
			\begin{cases}
				h(k+2)-2, & \text{ if } i=5;\\
				h(k+2)-1, & \text{ if } i \neq 5.
			\end{cases}
			\]
			
			\item [\upshape(4).]	If $\{x,y,z\}=\{2,3,5\}$, then $A= \{0,1,4\} \cup [6,k+2]$. We have
			\begin{align*}
				h(\{4\} \cup [6,k+2])&=	h((\lbrace 0 \rbrace \cup [2,k-2]) +4)\\
				&=h(\lbrace 0 \rbrace \cup [2,k-2]) +4h\\
				&=(\lbrace 0 \rbrace \cup [2,h(k-2)])+4h \text{ (by Proposition \ref{Proposition I}) }\\
				&=\lbrace 4h \rbrace \cup [4h+2,h(k+2)].
			\end{align*}
			
			For $h \geq 3$, 	\[
			4h+1=\underbrace{4+ \cdots +4}_{(h-3)  ~times}+ 6+6+1 \in hA.
			\]
			Suppose $m$ be a positive integer such that $4\leq m \leq 4h-1$.
			Clearly $4+r(m,4) \in 2A$ for each $m$. So, by Lemma \ref{Lemma IV}, $m \in hA$.
			Therefore,		
			\[
			hA =
			\begin{cases}
				[0,h(k+2)],  & \text{if $h \geq 3$ and $k \geq 5$};\\
				[0,2] \cup [4, 2(k+2)], & \text{if $h=2$ and $k\geq 6$};\\
				[0,2] \cup [4,8] \cup [10,14], & \text{if $h=2$ and $ k=5$},
			\end{cases}
			\] and hence,	\[	\left|hA\right| =
			\begin{cases}
				h(k+2)+1, & \text{if $h \geq 3$ and $k \geq 5$};\\
				2k+4, & \text{if $h=2$ and $k\geq 6$};\\
				13, & \text{if $h=2$ and $ k=5$}.
			\end{cases}
			\]

			\item [\upshape(5).]	If $\{x,y,z\} = \{2,3,k+1\}$, then $A=\{0,1\} \cup [4,k] \cup \lbrace k+2 \rbrace$. For $h\geq 3$, $h([0,1] \cup [4,k])=[0,hk]$ (by Lemma \ref{Lemma III})  and
			\begin{align*}
				h([4,k] \cup \lbrace k+2 \rbrace)&=h((k+2)-(\lbrace 0 \rbrace \cup [2, k-2]))\\
				&=h(k+2)-h(\lbrace 0 \rbrace \cup [2, k-2])\\
				&=h(k+2)-(\lbrace 0 \rbrace \cup [2, h(k-2)]) \text{ (by Proposition \ref{Proposition I})}\\
				&=[4h,h(k+2)-2] \cup \{ h(k+2) \}.
			\end{align*}
			Therefore,\[
			hA =
			\begin{cases}
				[0,h(k+2)-2] \cup \{ h(k+2) \},  & \text{if $h \geq 3$ and $k \geq 5$};\\
				[0,2] \cup [4, 2k+2] \cup \{ 2k+4 \}, & \text{if $h=2$ and $k\geq 5$},
			\end{cases}
			\] and hence,
			\[
			\left|hA\right| =
			\begin{cases}
				h(k+2), & \text{if $h \geq 3$ and $k \geq 5$};\\
				2k+3, & \text{if $h=2$ and $ k\geq 5$}.
			\end{cases}
			\]

			\item [\upshape(6).]	If $\{ x,y,z \}=\{ 2,3,i \}$,  where $6\leq i \leq k$, then $A=\{0,1\} \cup [4, i-1] \cup [i+1,k+2]$.
			By Lemma \ref{Lemma III}, we have
			\[
			hA=
			\begin{cases}
				[0,h(k+2)],  & \text{if $h \geq 3$ and $k \geq 6$};\\
				[0,2] \cup [4, 2k+4],  & \text{if $h=2$ and $k \geq 6$}, 
			\end{cases}
			\] and
			\[
			\left|hA\right| =
			\begin{cases}
				h(k+2)+1, & \text{if $h \geq 3$ and $k \geq 6$};\\
				2k+4 & \text{if $h=2$ and $ k\geq 6$}.
			\end{cases}
			\]

			\item [\upshape(7).]	If  $\{x,y,z\}=\{k-2,k-1,k+1\}$, then $A =[0,k-3] \cup \{k,k+2\}$. For $h=2$,  we have 	\[2A=[0,2k] \cup \{ 2k+2,2k+4 \}.\]
			Assume $h\geq 3$. Suppose $m$ be a positive integer such that  $k+2 \leq m \leq h(k+2)-1$. Then 
			\[1\leq \Big\lfloor \frac{m}{k+2}\Big\rfloor \leq h-1 \ \text{and} \ r(m,k+2) \in [0,k+1].\] 
			For  $r(m,k+2) \in [ 0, k-2] \cup  \{ k \}$, we have $k+2+r(m,k+2)\in 2A$, and therefore by Lemma \ref{Lemma IV}, $m \in hA$.  For $r(m,k+2) \in \{k-1,k+1\}$ and $1\leq \Big\lfloor \dfrac{m}{k+2}\Big\rfloor\leq h-2$, we writing $m$ respectively by 		
			\[ 	m =
			\underbrace{(k+2)+ \cdots +(k+2)}_{{\lfloor \frac{m}{k+2}\rfloor} \ times} +(k-3)+2  + \underbrace{0+\cdots+0}_{h-2-{\lfloor \frac{m}{k+2}\rfloor} ~times} \in hA,
			\]
			and
			\[  	m =
			\underbrace{(k+2)+ \cdots +(k+2)}_{\lfloor \frac{m}{k+2}\rfloor \ times} +k+1  + \underbrace{0+\cdots+0}_{h-2-\lfloor \frac{m}{k+2}\rfloor ~times} \in hA. \]  If $\lfloor \frac{m}{k+2} \rfloor =h-1$ and $r(m,k+2) \in \{k-1,k+1\}$, then $m \in \{h(k+2)-3, h(k+2)-1\}$. It is easy to check that $\{h(k+2)-3, h(k+2)-1\} \subseteq hA$ for $h \geq 3$. 
			Hence, $$hA=[0,h(k+2)-4] \cup \{h(k+2)-2, h(k+2)\},$$ and 
			\[|hA|=h(k+2)-1 \text{  for } h\geq 2 \text{ and } k \geq 5.\]

			\item [\upshape(8).] If   $\{x,y,z\}=\{i,i+1,k+1 \}$, where $3\leq i \leq k-3$, then $A =[0,i-1] \cup[i+2,k] \cup \{k+2\}$. For $h=2$ and $k\geq 6$, it is easy to see that $2A=[0,2k+2] \cup \{2k+4\}$. Now, assume $h\geq 3$. Using Lemma \ref{Lemma III}, we obtain \[h([0,i-1] \cup [i+2,k])=[0,hk] ~\text{for}~  h \geq 3.\] Also, $h([i+2,k] \cup \{ k+2 \} )=[h(i+2),h(k+2)-2] \cup \{ h(k+2) \}$ by Proposition \ref{Proposition I}. Hence, $$hA=[0,h(k+2)-2] \cup \{h(k+2)\},$$  and
			\[|hA|=h(k+2) \text{ for } h\geq 2 ~\text{and}~ k\geq 6.\]

			\item [\upshape(9).]	If $\{x,y,z\}=\{i,i+1,i+3\}$, where $3 \leq i \leq k-3$, then $A=[0,i-1] \cup \{i+2\} \cup [i+4,k+2]$. For $h=2$, it is easy to see that \[2A = \begin{cases}
				[0,2k+4], & ~\text{if}~ 3\leq i \leq k-4;\\
				[0,2k-2] \cup [2k,2k+4], &~\text{if}~i=k-3.
			\end{cases}\] Now, assume $h\geq 3$.  We have
			\begin{align*}
				h(\{ i+2 \} \cup [i+4,k+2])&=	h((\{ 0 \} \cup [2,k-i]) +i+2)\\
				&=h(\{ 0 \} \cup [2,k-i]) +h(i+2)\\
				&=\{ h(i+2) \} \cup [h(i+2)+2,h(k+2)]  (\text{by Proposition \ref{Proposition I}})
			\end{align*}
			and
			\[h(i+2)+1=\underbrace{(i+2)+\cdots+ (i+2)}_{(h-3) ~ times}+(i+4)+(i+4)+(i-1) \in hA.\]
			Also,
			\begin{align*}
				h([0,i-1] \cup \{ i+2 \})&= h(\{i+2\}-(\{ 0 \} \cup [3,i+2]))\\
				&=h(i+2)-h(\{ 0 \} \cup [3,i+2])\\
				&=h(i+2)-(\{ 0 \} \cup [3,h(i+2)])\\
				&=[0,h(i+2)-3] \cup \{h(i+2)\},
			\end{align*}
			\[h(i+2)-2=\underbrace{(i+2)+\cdots+ (i+2)}_{(h-3)~ times} + (i-2)+(i+2)+(i+4) \in hA,\] and
			\[h(i+2)-1=\underbrace{(i+3)+\cdots+ (i+3)}_{(h-3)~times} + (i-1)+(i+2)+(i+4) \in hA, \text{ for } h \geq 3.\] Thus, $hA =[0,h(k+2)]$ for $h\geq 3$. Hence, \[ hA = \begin{cases}
				[0,h(k+2)], & ~\text{if}~ h\geq 3;\\
				[0,2k+4], & ~\text{if}~h=2 ~\text{and}~ 3\leq i \leq k-4;\\
				[0,2k-2] \cup [2k,2k+4], &~\text{if}~h=2 ~\text{and}~ i=k-3,
			\end{cases}\]
			and
			\[ |hA| = \begin{cases}
				h(k+2)+1, & ~\text{if}~ h\geq 3;\\
				2k+5, & ~\text{if}~ h=2 ~\text{and}~ 3\leq i \leq k-4;\\
				2k+4, &~\text{if}~h=2 ~\text{and}~ i=k-3.	
			\end{cases}\]

			\item [\upshape(10).]If $\{x,y,z\}=\{i,i+1,j\}$, where $3\leq i\leq j-4 \leq k-4$, then $A=[0,i-1] \cup [i+2,j-1] \cup [j+1,k+2]$. So, $hA=[0,h(k+2)]$ for $h \geq 2$ (by Lemma \ref{Lemma III}). Hence, $\left|hA\right|=h(k+2)+1$.
		\end{enumerate}
		This completes the proof of the proposition.
	\end{proof}

	\begin{prop}\label{Proposition IV}
		Let $h\geq 2$ and $k \geq 5$  be positive integers. Let \[A =[0,k+2] \setminus \{x,y,z\},\] where $1 \leq x\leq y -2 \leq z-4 \leq k-3$.
		\begin{enumerate}
			\item[\upshape(1)] If $\{x,y,z\}$ is one of the sets $\{1,3,5\}$  and $\{k-3,k-1,k+1\}$, then $\left|hA\right|= h(k+2)-2$.
			
			\item[\upshape(2)] If $\{x,y,z\}$ is one of the sets $\{1,k-1,k+1\}$ and $\{1,3,k+1\}$, then $\left|hA\right|= h(k+2)-2$.
			
			\item[\upshape(3)] If $\{x,y,z\}$ is one of the sets $\{1,3,i\}$ where $6 \leq i \leq k$, and  $\{i,k-1,k+1\}$ where $2 \leq i \leq k-4 $,  then $\left|hA\right|= h(k+2)-1$.
			
			\item[\upshape(4)]  If $\{x,y,z\} = \{i,i+2,i+4\}$, where $2\leq i \leq k-4$, then  $\left|hA\right|= h(k+2)+1$.
			
			\item[\upshape(5)] If $\{x,y,z\}$ is one of the sets $\{i,i+2,k+1\}$  where $2 \leq i \leq k-4$,  and $\{1,i,i+2\}$ where $4 \leq i \leq k-2$, then $\left|hA\right|= h(k+2)$.
			
			\item[\upshape(6)] If $\{x,y,z\}$ is one of the sets $\{i,i+2,j\}$  where $ 2 \leq i \leq j-5 \leq k-5$, and  $\{i,j,j+2\}$ where $2 \leq i \leq j-3 \leq k-5$,  then $\left|hA\right|= h(k+2) +1$.
			
			\item[\upshape(7)] If $\{x,y,z\}= \{1,i,k+1\}$, where $4 \leq i \leq k-2$,  then  $|hA|=h(k+2)-1$.
			
			\item[\upshape(8)] If $\{x,y,z\}$ is one of the sets  $\{1,i,j\}$ where $4 \leq i \leq j-3 \leq k-3$, and $\{i,j,k+1\}$ where $2 \leq i \leq j-3 \leq k-5$, then  $|hA|=h(k+2)$.
			
			\item[\upshape(9)] If $2\leq x \leq y-3 \leq z-6 \leq k-6$,  then  $|hA|=h(k+2)+1$.
		\end{enumerate}
	\end{prop}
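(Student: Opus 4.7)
The plan is to compute $|hA|$ for each of the nine parts by exhibiting $hA$ explicitly as a union of intervals (and at most finitely many isolated points) and then counting. The reflection $\phi(a) = (k+2) - a$ satisfies $|h\phi(A)| = |hA|$ and pairs the two sub-families listed in parts (1), (2), (3), (5), (6), and (8), so only one representative per pair needs direct treatment; the remaining case follows by symmetry.

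The representative sub-cases split into three regimes. First, when the three excluded indices $x, y, z$ are widely spaced---parts (4), (6), (8), (9)---Lemma~\ref{Lemma III} applies to subsets straddling each gap (using an appropriate choice of $t$ so that $h \geq (r+t-1)/(t-1)$) and yields $hA = [0, h(k+2)]$, whence $|hA| = h(k+2) + 1$. Second, when exactly one gap is at an endpoint---parts (5) and (7)---the set $A$ has (up to reflection) the form $\{0\} \cup [r, s] \cup \cdots$, and Proposition~\ref{Proposition I} handles the endpoint piece while Lemma~\ref{Lemma III} fills the interior; $hA$ then equals $[0, h(k+2)]$ with one or two isolated missing values at the extremes. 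Third, when two gaps cluster at an endpoint---parts (1), (2), (3)---one first establishes a bulk inclusion such as $\{0, 2, 4\} \cup [6, h(k+2)] \subseteq hA$ for the representative $\{x,y,z\} = \{1,3,5\}$, by combining Proposition~\ref{Proposition I} applied to $\{0\} \cup [6, k+2] \subseteq A$ with Lemma~\ref{Lemma IV} at $a_t = 6$ (checking that $6 + r(m,6) \in 2A$ for each residue that arises).

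The main obstacle is the \emph{exclusion} step in the second and third regimes: one must certify that certain small (or, after reflection, large) integers fail to lie in $hA$. For instance, in part (1) with $A = \{0, 2, 4\} \cup [6, k+2]$, one has to verify $1, 3, 5 \notin hA$. This follows from a short parity-and-minimality argument: every positive element of $A$ is either even or at least $7$, so no nonnegative integer combination of $h$ elements of $A$ can equal an odd integer smaller than $7$. Analogous case-specific exclusions are required in parts (2), (3), and (7)---for example, ruling out $h(k+2) - 1$ in the reflected versions---and once these are in hand the claimed cardinality drops out by direct counting of the displayed union.
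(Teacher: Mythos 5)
Your overall architecture (reflection symmetry to halve the case list, interval-filling via Lemmas~\ref{Lemma III}/\ref{Lemma IV} and Proposition~\ref{Proposition I}, plus explicit exclusion arguments for the missing elements) matches the paper's, and your parity argument for $1,3,5\notin hA$ in part (1) is actually cleaner than the paper's. But there are two genuine problems with the plan as stated.

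First, you have misclassified part (8). For $\{x,y,z\}=\{1,i,j\}$ with $4\leq i\leq j-3\leq k-3$ the set is $A=\{0\}\cup[2,i-1]\cup[i+1,j-1]\cup[j+1,k+2]$: removing $1$ leaves $0$ as an isolated point, the smallest positive element of $A$ is $2$, so $1\notin hA$ and the correct answer is $hA=\{0\}\cup[2,h(k+2)]$ with $|hA|=h(k+2)$, not $h(k+2)+1$ as your ``first regime'' would produce. Part (8) belongs with your second regime (one gap at an endpoint), exactly like part (7).

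Second, the mechanism ``Lemma~\ref{Lemma III} applies to subsets straddling each gap'' breaks down precisely in parts (4) and (6), where two of the removed elements are at distance $2$ and hence leave \emph{singleton} islands ($\{i+1\}$, $\{i+3\}$). Lemma~\ref{Lemma III} needs both blocks $[0,x-1]$ and $[x+r,y]$ to contain at least $t\geq 2$ consecutive elements, so the only admissible straddling subset in part (4) is $[0,i-1]\cup[i+5,k+2]$ with $r=5$; the hypothesis $h\geq(r+t-1)/(t-1)$ then forces $h\geq 6$ for $t=2$ and still fails for $h=2,3$ unless $i$ and $k-i$ are both large. Since parts (4) and (6) are asserted for all $h\geq 2$, Lemma~\ref{Lemma III} alone cannot close these cases: you must use the isolated points themselves, as the paper does, by applying Lemma~\ref{Lemma IV} with $a_t=i+3$ and verifying $(i+3)+r(m,i+3)\in 2A$ for every residue (writing, e.g., $(i+3)+i=(i+5)+(i-2)$ and $(i+3)+(i+2)=(i+6)+(i-1)$ for the two residues not covered directly). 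A smaller issue of the same flavour occurs in your part (1) sketch: Proposition~\ref{Proposition I} applied to $\{0\}\cup[6,k+2]$ requires $k+2\geq 11$, so for $5\leq k\leq 8$ you need to anchor the argument at $4$ (as the paper does) rather than at $6$.
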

	
	\begin{proof}
		\begin{enumerate}
			\item [\upshape(1).]
			\begin{enumerate}
				\item [\upshape(a)]  If $\{x,y,z\} = \{1,3,5\}$, then $A=\{0,2,4\} \cup [6,k+2]$. Using Proposition \ref{Proposition I}, we have
				\begin{align*}
					h(\lbrace 4 \rbrace \cup [6,k+2]) &= h(\lbrace 0 \rbrace \cup [2,k-2]+4)\\
					& = h(\lbrace 0 \rbrace \cup [2,k-2])+4h\\
					& = (\{0\} \cup [2,h(k-2)])+4h\\
					&= \lbrace 4h \rbrace \cup [4h+2, h(k+2)].
				\end{align*}
				Also,\[4h+1 = \underbrace{4+ \cdots +4}_{(h-2) \ times}+7+2 \in hA ~ \text{for all $ h \geq 2$}.\]
				Next, we show that $[4,4h-1]\subseteq hA$. Suppose $m$ be a positive integer such that $4\leq m \leq 4h-1$. Note that $ 4+r(m,4) \in 2A$ for $r(m,4) \in \{0,2,3\}$, and therefore by Lemma \ref{Lemma IV}, $m\in hA$. For $r(m,4)=1$ and $\lfloor \frac{m}{4}\rfloor \geq 2$, we writing $m$ in the following way:
				\[m = \underbrace{4+ \cdots +4}_{\lfloor \frac{m}{4}\rfloor-2 \ times}+7+2 \in hA.\]
				Note also that, $r(m,4) = 1$ and $\lfloor \frac{m}{4}\rfloor =1$ gives $m=5 \notin hA$. Hence,
				\[ hA= \lbrace 0,2,4 \rbrace \cup [6,h(k+2)],\] and \[|hA|= h(k+2)-2.\]
				
				\item [\upshape(b)] If $\{x,y,z\} = \{k-3,k-1,k+1\}$, then we have $$A = [0,k-4] \cup \lbrace k-2,k,k+2 \rbrace=(k+2) - (\lbrace 0,2,4 \rbrace \cup [6,k+2]).$$ Since the cardinality of $hA$ is translation invariant, so  $\left|hA\right|= h(k+2)-2$.
			\end{enumerate}
			
			\item[\upshape(2).] \begin{enumerate}
				\item [\upshape(a)]  If $\{x,y,z\}=\{1,k-1,k+1\}$, then $A= \{0\} \cup [2,k-2] \cup \{k,k+2\}$. For $h=2$, it is easy to see that \[2A=\{0\} \cup[2,2k] \cup \{2k+2,2k+4\}.\] Now, assume $h\geq 3$. Next, we show that $[k+2,(h-1)(k+2)-1] \subseteq hA$. Suppose $m$ be a positive integer such that $k+2 \leq m \leq (h-1)(k+2)-1$. Then \[1 \leq  \Big\lfloor \frac{m}{k+2} \Big\rfloor\leq h-2 \ \text{and} \ r(m,k+2) \in [0,k+1].\]
				Note that, $(k+2) +r(m,k+2) \in 2A$ for $r(m,k+2) \in [0, k-2] \cup \{k\}$, so by Lemma \ref{Lemma IV}, $m \in hA$.  For $r(m,k+2) \in \{k-1,k+1\}$, we writing  $m$ respectively, by
				\[ m= \underbrace{(k+2)+ \cdots +(k+2)}_{\lfloor \frac{m}{k+2}\rfloor-1~times} +k+(k-2)+3 +\underbrace{ 0+ \cdots +0}_{h-{\lfloor \frac{m}{k+2}\rfloor}-2~times}\in hA,\]
				\[ m= \underbrace{(k+2)+ \cdots +(k+2)}_{\lfloor \frac{m}{k+2}\rfloor-1~times} +(k+2)+(k-2)+3 +\underbrace{ 0+ \cdots +0}_{h-\lfloor \frac{m}{k+2}\rfloor-2~times} \in hA.\]
				It is easy to check that $(h-1)(k+2), (h-1)(k+2)+1, \ldots, h(k+2)-4, h(k+2)-2,h(k+2)$, all are in $hA$.  If $i\geq 2$, then $(h-i)(k+2)+ik\leq h(k+2)-4$. So, $h(k+2)-3$ and $h(k+2)-1$ are not in $hA$.  Hence,
				\[hA=\{0\} \cup [2,h(k+2)-4] \cup \{h(k+2)-2,h(k+2)\},\] and  \[\left|hA\right|= h(k+2)-2.\]
				
				\item [\upshape(b)] If $\{x,y,z\} = \{1,3,k+1\}$, then we have $$A= \{0,2\} \cup [4,k] \cup \{k+2\} =  (k+2)-(\{0\} \cup [2,k-2] \cup \{k,k+2\}).$$ Since cardinality of $hA$ is translation invariant, so $\left|hA\right|= h(k+2)-2.$
			\end{enumerate}
			
			\item[\upshape(3).] \begin{enumerate}
				\item [\upshape(a)]  If $\{x,y,z\}=\{1,3,i\}$, where $6 \leq i \leq k$, then $A=\{0,2\} \cup [4,i-1] \cup [i+1,k+2]$. We have,
				\begin{align*}	
					h([4,i-1] \cup [i+1,k+2]) &= h([0,i-5] \cup [i-3,k-2]) +4h\\
					&=[4h,h(k+2)],  \ (\text{by Lemma \ref{Lemma III}}).	
				\end{align*}
				Suppose $m$ be positive integer such that $4 \leq m \leq 4h-1$. Then $4 + r(m,4) \in 2A$, so by Lemma \ref{Lemma IV}, $m\in hA$. Hence, \[hA=\{0,2\} \cup [4,h(k+2)],\] and \[\left|hA\right|=h(k+2)-1.\] 
				
				\item [\upshape(b)] If $\{x,y,z\}=\{j,k-1,k+1\}$, where $2 \leq j \leq k-4$, then we have $$A=(k+2)- (\{0,2\} \cup [4,i-1] \cup [i+1,k+2]),$$ where $6 \leq i=k+2-j \leq k$. Since cardinality of $hA$ is translation invariant, so  $\left|hA\right|=h(k+2)-1.$
			\end{enumerate}

			\item[\upshape(4).]  If $\{x,y,z\} = \{i,i+2,i+4\}$, where $2\leq i \leq k-4$, then $A= [0,i-1] \cup \{ i+1, i+3 \} \cup [i+5,k+2]$. Using  Proposition \ref{Proposition I}, we have 
			\begin{align*}
				h(\{i+3\} \cup [i+5,k+2])&=h(\{0\} \cup [2,k-i-1]+(i+3))\\
				&=h(\{0\} \cup [2,k-i-1])+h(i+3)\\
				&=(\{0\} \cup [2,h(k-i-1)])+h(i+3) \\
				&=\{h(i+3)\} \cup [h(i+3)+2,h(k+2)].
			\end{align*}
			Moreover, $h(i+3)+1=(h-2)(i+3)+(i+6)+(i+1) \in hA$. Next, we show that $[i+3, h(i+3)-1] \subseteq hA$. Suppose $m$ be a positive integer such that $i+3 \leq m \leq h(i+3)-1$. It is easy to see that $(i+3) + r(m,i+3) \in 2A$ for $r(m,i+3) \in [0, i-1] \cup  \{i+1\}$. 
			If $r(m,i+3) \in \{i, i+2\}$, then we have \[(i+3)+i=(i+5)+(i-2) \in 2A\] and  \[(i+3)+(i+2)=(i+6)+(i-1) \in 2A.\]
			Thus, $(i+3) + r(m,i+3) \in 2A$, and  by Lemma \ref{Lemma IV}, $m\in hA$. Hence, \[hA=[0,h(k+2)],\] and \[\left|hA\right|=h(k+2)+1    ~ \text{for}~ h \geq 2 ~ \text{and}~ k\geq 6.\]

			\item[\upshape(5).] \begin{enumerate}
				\item [\upshape(a)]  If $\{x,y,z\}= \{i,i+2,k+1\}$,  where $2 \leq i \leq k-4$, then $A=[0,i-1] \cup \lbrace i+1 \rbrace \cup [i+3,k] \cup \lbrace k+2 \rbrace$. We have
				\begin{align*}
					h([i+3,k] \cup \lbrace k+2 \rbrace) &=h((k+2)- (\lbrace 0 \rbrace \cup [2,k-i-1]))\\
					&= h(k+2)-(\lbrace 0 \rbrace \cup [2,h(k-i-1)])\\
					&=[h(i+3),h(k+2)-2] \cup \lbrace h(k+2) \rbrace \subseteq hA,
				\end{align*}
				and $[0,i+2] \subseteq hA$. Next, we show that $[i+3,h(i+3)-1] \subseteq hA$. Suppose $m$ be a positive integer such that  $i+3 \leq m \leq h(i+3)-1$. It is easy to see that $(i+3)+r(m,i+3) \in 2A$ for $r(m,i+3) \in \{0, 1, \ldots, i-1, i+1\}$. If $r(m,i+3) \in \{i,i+2\}$, then we have \[ (i+3)+i=(i+4)+(i-1) \in 2A\] and \[(i+3)+(i+2)=(i+4)+(i+1) \in 2A.\] So, by Lemma \ref{Lemma IV}, $m\in hA$. Hence \[hA=[0,h(k+2)-2] \cup \lbrace h(k+2) \rbrace\] and
				\[\left|hA\right|=h(k+2).\]
				
				\item [\upshape(b)] If $\{x,y,z\}= \{1,j,j+2\}$, where $4 \leq j \leq k-2$, then we have $$A = \{k+2\}-([0,k+2] \setminus \{i, i+2,k+1\}),$$ where $2 \leq i=k-j \leq k-4 $. Since sumset $hA$ is translation invariant, so  $\left|hA\right|=h(k+2).$
			\end{enumerate}
			
			\item [\upshape(6).]\begin{enumerate}
				\item [\upshape(a)]  If $\{x,y,z\}= \{i,i+2,j\}$,  where $ 2 \leq i \leq j-5 \leq k-5$. Then  $A=[0,i-1] \cup \lbrace i+1 \rbrace \cup [i+3,j-1] \cup [j+1,k+2]$. Since $i+3 \leq j-2$, we have  $h( [i+3,j-1] \cup [j+1,k+2])=[h(i+3),h(k+2)]$, by Lemma \ref{Lemma III}. The arguments similar to the ones in case 5, show that $m\in hA$, where  $m \in [i+3, h(i+3)-1]$. Hence, $hA=[0,h(k+2)]$ and $|hA|=h(k+2)+1$.
				
				\item [\upshape(b)] If $\{x,y,z\} = \{i,j,j+2\}$, where $2 \leq i \leq j-3 \leq k-5$, then $A$ is translation of the set $[0,i_{0}-1] \cup \lbrace i_{0}+1 \rbrace \cup [i_{0}+3,j-1] \cup [j_{0}+1,k+2]$ where $ 2 \leq i_{0} \leq j_{0}-5 \leq k-5$. Therefore, $|hA|=h(k+2)+1$.
				
			\end{enumerate}
			\item [\upshape(7).] If $\{x,y,z\}= \{1,i,k+1\}$, where $4 \leq i \leq k-2$, then $A=\{0\} \cup [2,i-1] \cup [i+1,k] \cup \{k+2\}$. We have,
			\[h(\{0\} \cup [2,i-1])=\{0\} \cup [2,h(i-1)] \ (\text{by Proposition } \ref{Proposition I}),\] \[h([2,i-1] \cup [i+1,k])=[2h,hk] \ (\text{by Lemma } \ref{Lemma III}),\] and
			\begin{align*}
				h([i+1,k] \cup \{k+2\})& =h(k+2)-h(\{0\} \cup [2,k+1-i])\\
				& =[h(i+1),h(k+2)-2] \cup \{h{k+2}\}  \ (\text{by Proposition} \  \ref{Proposition I}).	
			\end{align*}
			Hence, $hA= \lbrace 0 \rbrace \cup [2,h(k+2)-2] \cup \lbrace h(k+2) \rbrace$ and $|hA|=h(k+2)-1$.
			
			\item [\upshape(8).] \begin{enumerate}
				\item [\upshape(a)]  If $\{x,y,z\} = \{1,i,j\}$, where $4 \leq i\leq j-3 \leq k-3$, then $A=\{0\} \cup [2,i-1] \cup [i+1,j-1] \cup [j+1,k+2]$. The arguments in this case are similar to the one in case 7. Hence,  $hA= \lbrace 0 \rbrace \cup [2,h(k+2)]$ and $|hA|=h(k+2)$.
				
				\item [\upshape(b)]   If $\{x,y,z\}= \{i,j,k+1\}$, where $2 \leq i \leq j-3 \leq k-5$, then $A$ is a translation of $\{0\} \cup [2,i_{0}-1] \cup [i_{0}+1,j_{0}-1] \cup [j_{0}+1,k+2]$,  where $4 \leq i_{0}\leq j_{0}-3 \leq k-3$. Hence, $|hA|=h(k+2)$.
			\end{enumerate}
			\item [\upshape(9).] If  $2\leq x \leq y-3 \leq z-6 \leq k-6$, then $A=[0,x-1] \cup [x+1,y-1] \cup [y+1,z-1] \cup [z+1,k+2]$.  Lemma \ref{Lemma III} implies that $hA=  [0,h(k+2)]$ and hence, $|hA|=h(k+2)+1$.
		\end{enumerate}
	\end{proof}

	\section{Proof of Theorem \ref{Main Theorem}}\label{Mainsection}
	
	\begin{proof}
		Let $k\geq 6$ and $ \left|2A\right|=2k+2 $. Then $ \left|2A\right|=2k+2 \leq 3k-4$ and   by Theorem \ref{Theorem Freiman II},  $A\subseteq [0,k+2]$. In  rest of the cases, $A\subseteq [0,k+2]$ due to Lemma \ref{Lemma I}. Now, Theorem  \ref{Theorem Tang II}, Proposition \ref{Proposition II}, \ref{Proposition III} and \ref{Proposition IV} give the structure of $A$. This completes the proof of Theorem \ref{Main Theorem}.
	\end{proof}

	\section{Conclusion}\label{conclusion}
	We know that,  $\left|3A\right|=3k-2$ if and only if  $ \left|2A\right| =  2k-1$, the reason being $A$ is an arithmetic progression. Theorem \ref{Theorem Lev} gives a relation between the sizes of $hA$ and $(h-1)A$. Let $k\geq 6$. Then by Theorem \ref{Theorem Tang I}, \ref{Theorem Tang II} and \ref{Main Theorem}, we have the following:
	\begin{enumerate}
		
		\item  $3k+1 \leq \left|3A\right| \leq 3k+2$ if and only if $ \left|2A\right|  = 2k+1$.
		
		\item If $\left|3A\right|=3k+3$, then $ \left|2A\right|=2k+2$.
		
		\item If $\left|3A\right|=3k+4$, then $2k+2 \leq \left|2A\right|  \leq 2k+3$.
	\end{enumerate}
	
	A remark of Tang and Xing (\cite{TANG2021}, Remark 1.3), states that there is no set $A$ such that $|3A|=3|A|-1$. Similar to this remark, we have the following observation.
	
	\begin{enumerate}
		
		\item If $h\geq 3$ and $ k\geq 5$, then there is no set $A$ such that $hk-h+2 \leq \left|hA\right|  \leq hk-1$ (See Lemma 2.1 and Proposition 3.1 in \cite{TANG2021}).
		
		\item If $h\geq 4$ and $ k\geq 5$, then there is no set $A$ such that $hk+2 \leq \left|hA\right|  \leq hk+h-2$ (See Lemma 2.1 and Proposition 3.1 in \cite{TANG2021}).

		\item If $h\geq 5$ and $ k\geq 5$, then there is no set $A$ such that $hk+h+2 \leq \left|hA\right|  \leq hk+2h-3$ (See Lemma 2.1 and Proposition 3.2 - 3.4 in \cite{TANG2021}).
		
		\item If $h\geq i+4$ where $i \in [2,h-4]$ and $ k\geq 5$, then there is no set $A$ such that $hk+2h+2 \leq \left|hA\right| =hk+2h+i \leq hk+3h-4$ (See Lemma \ref{Lemma I} and Proposition \ref{Proposition II} - \ref{Proposition IV} ).
	\end{enumerate}

	\section*{Acknowledgment}
	
	The first author would like to thank to the Council of Scientific and Industrial Research (CSIR), India for providing the grant to carry out the research with Grant No. 09/143(0925)/2018-EMR-I and the second author wishes to thank to the Science and
	Engineering Research Board (SERB), India for providing the grant with Grant No.
	MTR/2019/000299.

	\bibliographystyle{amsplain}

\end{document}